\journal{Journal of \LaTeX\ Templates}
\newtheorem{definition}{Definition}[section]
\newtheorem{remark}{Remark}[section]
\newtheorem{theorem}{Theorem}[section]
\newtheorem{proposition}{Proposition}[section]
\newtheorem{lemma}{Lemma}[section]
\newenvironment{proof}{{\noindent \it Proof.}}{\hfill $\square$\par}
\begin{document}

\begin{frontmatter}

\title{Improvements to steepest descent method for multi-objective optimization}

\author[mymainaddress,mysecondaryaddress]{Wang Chen}
\ead{chenwangff@163.com}

\author[mymainaddress]{Liping Tang}
\ead{tanglps@163.com}

\author[mymainaddress]{Xinmin Yang}
\ead{xmyang@cqnu.edu.cn}

\address[mymainaddress]{National Center for Applied Mathematics in Chongqing, Chongqing Normal University, Chongqing, 401331, China}
\address[mysecondaryaddress]{School of Mathematical Sciences, University of Electronic Science and Technology of China, Chengdu, 611731, China}

\begin{abstract}
In this paper, we propose a simple yet efficient strategy for improving the multi-objective steepest descent method proposed by Fliege and Svaiter (Math Methods Oper Res, 2000, 3: 479--494). The core idea behind this strategy involves incorporating a positive modification parameter into the iterative formulation of the multi-objective steepest descent algorithm in a multiplicative manner. 
This modification parameter captures certain second-order information associated with the objective functions. We provide two distinct methods for calculating this modification parameter, leading to the development of two improved multi-objective steepest descent algorithms tailored for solving multi-objective optimization problems. Under reasonable assumptions, we demonstrate the convergence of sequences generated by the first algorithm toward a critical point. Moreover, for strongly convex multi-objective optimization problems, we establish the linear convergence to Pareto optimality of the sequence of generated points. The performance of the new algorithms is empirically evaluated through a computational comparison on a set of multi-objective test instances. The numerical results underscore that the proposed algorithms consistently outperform the original multi-objective steepest descent algorithm.
\end{abstract}

\begin{keyword}
Multi-objective optimization \sep Steepest descent method \sep Pareto critical  \sep Pareto critical \sep Convergence analysis
\end{keyword}

\end{frontmatter}

\section{Introduction}

Let us first consider the following single-objective optimization problem:
\begin{equation}\label{sop}
	\min_{x\in\mathbb{R}^{n}}\quad f(x),\\
\end{equation}
where $f:\mathbb{R}^{n}\rightarrow\mathbb{R}$ is continuously differentiable. One of the earliest and most well-known methods for solving problem \eqref{sop} is the gradient descent method, also known as the steepest descent (SD) method, designed by Cauchy \cite{cauchy1848method} in 1847. The algorithm begins with an arbitrarily chosen initial point $x^{0}\in{\rm dom} f$ and generates a sequence of iterates using the following rule:
\begin{equation}\label{sd_iterative}
	x^{k+1}=x^{k}+t_{k}d^{k},~k=0,1,2,\ldots,
\end{equation}
where $t_{k}>0$ is the stepsize and $d^{k}=-\nabla f(x^{k})$ is the negative gradient of $f$.  The SD method behaves poorly except for very well-conditioned problems. To improve the behavior of the SD algorithm, Raydan and Svaiter \cite{raydan2002relaxed} introduced a relaxation parameter denoted as $\theta_{k}$, with values ranging between 0 and 2, into the iterative formulation represented by \eqref{sd_iterative}. Notably, the selection of $\theta_{k}$ is performed randomly during the iterative process. This innovative modification gives rise to a randomly relaxed SD method tailored for the resolution of quadratic optimization problems. In order to alleviate the constraints of the objective function, Andrei \cite{andrei2006acceleration} devised a novel technique to  determine the parameter $\theta_{k}$. The resulting algorithm in \cite{andrei2006acceleration} was proven to be linearly convergent, exhibiting a significant improvement in the reduction of the function value. Subsequently, the author in \cite{stanimirovic2010accelerated} presented a new method for choosing the parameter $\theta_{k}$, wherein an appropriate diagonal matrix was employed to approximate the Hessian of the objective function. Numerical justifications demonstrating the superiority of the improved SD algorithms over the standard SD algorithm were provided in \cite{raydan2002relaxed, andrei2006acceleration, stanimirovic2010accelerated}. The similar idea was also applied in the research works of  \cite{petrovic2018hybridization,ivanov2021accelerated,ivanov2023accelerated}.

Consider now the following multi-objective optimization problem:
\begin{equation}\label{mop}
	\min_{x\in\mathbb{R}^{n}}\quad F(x)=(F_{1}(x),F_{2}(x),...,F_{m}(x))^{\top},\\
\end{equation}
where $F:\mathbb{R}^{n}\rightarrow\mathbb{R}^{m}$ is a continuously differentiable vector-valued function. The objective functions in \eqref{mop} are usually conflicting, so a unique solution optimizing all the objective functions simultaneously does not exist. Instead, there exists a set of points known as the Pareto optimal solutions, which
are characterized by the fact that an improvement in one objective will result in a deterioration in at least one of the other objectives. Applications of this type of problem can be found in various areas, such as engineering \cite{R_m2013}, finance \cite{Z_m2015}, environment analysis \cite{F_o2001}, management science \cite{T_a2010}, machine learning \cite{J_m2006}, and so on. 

Over the past two decades, a class of methods extensively studied for solving problem \eqref{mop} is the {descent algorithms}, which still follow the iterative scheme \eqref{sd_iterative}. The stepsize $t_{k}>0$ is obtained by a strategy in the multi-objective sense, and $d^{k}$ is a common descent direction for all objective functions $F_{1},F_{2},\ldots,F_{m}$. The suitable settings of $t_{k}$ and $d^{k}$ can ensure that the value of the objective functions decreases at each iteration in the partial order induced by the natural cone. 
A variety of multi-objective stepsize strategies for determining $t_{k}$ have been  proposed:
Armijo \cite{fliege2000steepest,fliege2009newton}, (strong) Wolfe \cite{lucambio2018nonlinear,goncalves2022study}, Goldstein \cite{wang2019extended}, nonmonotone \cite{mita2019nonmonotone,chen2023conditional}, etc. The descent direction $d^{k}$ in multi-objective optimization needs to be obtained by solving an auxiliary subproblem at $x^{k}$. To our knowledge, there are several strategies for choosing the search direction $d^{k}$ 
(see \cite{fliege2000steepest,fliege2009newton,wang2019extended,qu2011quasi,lucambio2018nonlinear,cocchi2020convergence,chen2023memory,ansary2015modified,goncalves2022study} and references therein). A representative descent algorithm is Fliege and Svaiter's \cite{fliege2000steepest} research work in 2000, where $t_{k}$ satisfies the multi-objective Armijo rule and $d^{k}$ is generated by solving the strongly convex quadratic problem (see \eqref{sub_pro}) or its dual problem (see \eqref{dual} and \eqref{dual_sol}). This yields the multi-objective steepest descent (MSD) method. Under mild conditions, it was shown that the MSD algorithm converges to a point satisfying certain first-order necessary condition for Pareto optimality. As reported in \cite{mita2019nonmonotone,morovati2018barzilai}, it does not, however, perform satisfactorily in practical computations, possibly due to the small stepsize.

The purpose of this paper is to take a step further on the direction of Fliege and Svaiter's \cite{fliege2000steepest} work. Drawing inspiration from the works in \cite{raydan2002relaxed,andrei2006acceleration}, we propose an improved scheme for the MSD algorithm to solve problem \eqref{mop}, utilizing the following iterative form:
\begin{equation}\label{new_iter}
	x^{k+1}=x^{k}+\theta_{k} t_{k}v(x^{k}),
\end{equation}
where $v(x^{k})$ represents the multi-objective descent direction (see \eqref{opt_sol} or \eqref{dual_sol}), and $\theta_{k}>0$ is a modification parameter that accumulates second-order information about the objective functions around $x^{k}$. We present two different approaches for calculating the parameter $\theta_{k}$. The basic idea behind the first method
is to approximate the Hessian of each objective function using a common diagonal matrix. It is noteworthy that the authors in \cite{el2021accelerated} also employed a similar idea to introduce a multi-objective diagonal steepest descent method for solving problem \eqref{mop}. In the second scheme, we initially apply the MSD algorithm to obtain an iterative point $z^{k}$ and then use the second-order information of the objective function at the points $x^{k}$ and $z^{k}$ to update the parameter $\theta_{k}$. Consequently, we introduce two improved MSD algorithms for solving problem \eqref{mop}. Under suitable assumptions, we prove that the first algorithm converges to points that satisfy the first-order necessary conditions for Pareto optimality. Finally, numerical experiments on several multi-objective optimization problems illustrating the practical behavior of the proposed approaches are presented, and comparisons with the MSD algorithm \cite{fliege2000steepest} and the multi-objective diagonal steepest descent algorithm \cite{el2021accelerated} are discussed.

The outline of this paper is as follows. Section
\ref{sec:2} provides some basic definitions, notations and auxiliary results. Section \ref{sec:3} gives the first improvement scheme and analyzes its convergence. Section \ref{sec:4} presents the second improvement scheme. Section \ref{sec:5} includes numerical experiments to demonstrate the performance of the proposed algorithms. Finally, in Section \ref{sec:6}, some conclusions and remarks are given.

\section{Preliminaries}\label{sec:2}

Throughout this paper, we use $\langle \cdot, \cdot \rangle$ and $\|\cdot\|$ to represent, respectively, the standard inner product and norm in $\mathbb{R}^{n}$. The notations $\mathbb{R}_{+}^{n}$ and $\mathbb{R}_{++}^{n}$ are employed to denote the non-negative orthant and positive orthant of $\mathbb{R}^{n}$, respectively. For any positive integer $m$, we define $\langle m \rangle = \{1, 2, \ldots, m\}$. As usual, for $x,y\in\mathbb{R}^{n}$, we use the symbol $\preceq$ to denote the classical partial order defined by
\begin{equation*}
	x\preceq y ~\Leftrightarrow ~y-x\in\mathbb{R}_{+}^{n}.
\end{equation*}

\begin{definition}\setstretch{1.25}\normalfont\cite{miettinen1999nonlinear}
	A point $\bar{x}\in\mathbb{R}^{n}$ is said to be \emph{Pareto optimal} of problem \eqref{mop} if there exists no $x\in\mathbb{R}^{n}$ such that $F(x)\preceq F(\bar{x})$ and $F(x)\neq F(\bar{x})$.
\end{definition}

A first-order necessary condition introduced in \cite{fliege2000steepest} for Pareto optimality of a point $\bar{x}\in\mathbb{R}^{n}$ is
\begin{equation}\label{fir_ord_opt}
	JF(\bar{x})(\mathbb{R}^{n})\cap(-\mathbb{R}^{m}_{++})=\emptyset,
\end{equation}
where $ JF(\bar{x})(\mathbb{R}^{n})=\{JF(\bar{x})d:d\in\mathbb{R}^{n}\}$ and 
$$JF(\bar{x})d=(\langle\nabla F_{1}(\bar{x}),d\rangle,\langle\nabla F_{2}(\bar{x}),d\rangle,\ldots,\langle\nabla F_{m}(\bar{x}),d\rangle)^{\top}.$$

A point $\bar{x}\in\mathbb{R}^{n}$ that satisfies the relation \eqref{fir_ord_opt} is referred to as a \emph{Pareto critical} point (see \cite{fliege2000steepest}). Alternatively, for any $d\in\mathbb{R}^{n}$, there exists $i^{*}\in\langle m\rangle$ such that $$(JF(\bar{x})d)_{i^{*}}=\langle\nabla F_{i^{*}}(\bar{x}),d\rangle\geq0,$$
which implies $\max_{i\in\langle m\rangle}\langle\nabla F_{i}(\bar{x}),d\rangle\geq0$ for any $d\in\mathbb{R}^{n}$. 
If $x\in\mathbb{R}^{n}$ is not a Pareto critical point, then there exists a vector $d\in\mathbb{R}^{n}$ satisfying $JF(x)d\in-\mathbb{R}^{m}_{++}$. We refer to the vector $d$ as a \emph{descent direction} for $F$ at $x$.

Define the function $\psi:\mathbb{R}^{n}\times\mathbb{R}^{n}\rightarrow \mathbb{R}$ by
\begin{equation}\label{h}
	\psi(x,d)=\max_{i\in\langle m\rangle}\langle\nabla F_{i}(x),d\rangle.
\end{equation}
Clearly, at the point $x$, $\psi$ is convex and positively homogeneous (i.e., $\psi(x,\varrho z)=\varrho \psi(x,z)$ for all $\varrho>0$ and $z\in\mathbb{R}^{n}$). Let us now consider the following scalar optimization problem:
\begin{equation}\label{sub_pro}
	\min_{d\in\mathbb{R}^{n}}\:\psi(x,d)+\frac{1}{2}\|d\|^{2}.
\end{equation}
The objective function in \eqref{sub_pro} is proper, closed and strongly convex. Therefore, problem \eqref{sub_pro} admits a unique optimal solution, referred to as the \emph{steepest descent direction} (see \cite{fliege2000steepest}). Denote the optimal solution of (\ref{sub_pro}) by $v(x)$, i.e.,
\begin{equation}\label{opt_sol}
	v(x)=\mathop{\rm argmin}_{d\in\mathbb{R}^{n}}\: \psi(x,d)+\frac{1}{2}\|d\|^{2},
\end{equation}
and let the optimal value of (\ref{sub_pro}) be defined as $\theta(x)$, i.e., 
\begin{equation}\label{opt_val}
	\gamma(x)=\psi(x,v(x))+\frac{1}{2}\|v(x)\|^{2}.
\end{equation}

\begin{remark}\setstretch{1.25}\normalfont
	Observe that in scalar optimization (i.e., $m=1$), we obtain $\psi(x,d)= \langle\nabla F_{1}(x),d\rangle$, $v(x)=-\nabla F_{1}(x)$ and $\gamma(x)=-\|\nabla F_{1}(x)\|^{2}/2$.
\end{remark}

To determine  $v(x)$, one can consider the corresponding dual problem of \eqref{sub_pro} (see \cite{fliege2000steepest}):

\begin{equation}\label{dual}
	\begin{aligned}
		\min&\quad \dfrac{1}{2}\left\|\sum_{i=1}^{m}\lambda_{i}\nabla F_{i}(x)\right\|^{2}\\
		{\rm s.t.}&\quad \lambda\in\varLambda^{m},
	\end{aligned}
\end{equation}
where $\varLambda^{m}=\{\lambda\in\mathbb{R}^{m}:\sum_{i=1}^{m}\lambda_{i}=1,\lambda_{i}\geq0,\forall i\in\langle m\rangle\}$ stands for the simplex set. Then, $v(x)$ can also be represented as
\begin{equation}\label{dual_sol}
	v(x)=-\sum_{i=1}^{m}\lambda_{i}(x)\nabla F_{i}(x),
\end{equation}
where $\lambda(x)=(\lambda_{1}(x),\lambda_{2}(x),\ldots,\lambda_{m}(x))$ is optimal solution of $\eqref{dual}$.

We finish this section with the following results, which will be used in our subsequent analysis.

\begin{proposition}\setstretch{1.25}{\rm\cite{fliege2000steepest}}\label{pa_sta_equ}
	Let $v(\cdot)$ and $\gamma(\cdot)$ be as in \eqref{opt_sol} and \eqref{opt_val}, respectively. The following statements hold:
	\begin{enumerate}[{\rm(i)}]\setlength{\itemsep}{-0.0in}
		\item if $x$ is a Pareto critical point of problem \eqref{mop}, then $v(x)= 0$ and $\gamma(x)=0$;
		\item if $x$ is not a Pareto critical point of problem \eqref{mop}, then $v(x)\neq 0$, $\gamma(x)<0$ and $\psi(x,v(x))<-\|v(x)\|^{2}/2<0$;
		\item $v(\cdot)$ is continuous.
	\end{enumerate}
\end{proposition}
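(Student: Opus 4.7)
The plan is to lean on two features of the subproblem \eqref{sub_pro}: strong convexity of the objective $\varphi(x,d) := \psi(x,d) + \frac{1}{2}\|d\|^2$ in $d$ (with modulus $1$, uniformly in $x$), which gives a unique minimizer $v(x)$, and the positive homogeneity of $\psi(x,\cdot)$ noted after equation \eqref{h}. Parts (i) and (ii) are short manipulations using the test point $d=0$ and test directions in the feasible set; part (iii) is the substantive statement and will need a compactness-plus-uniqueness argument.

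For (i), if $x$ is Pareto critical then, as observed just after \eqref{fir_ord_opt}, $\psi(x,d) = \max_{i\in\langle m\rangle}\langle\nabla F_i(x),d\rangle \ge 0$ for every $d\in\mathbb{R}^n$. Hence $\varphi(x,d) \ge \tfrac{1}{2}\|d\|^2 \ge 0$, with equality at $d=0$, so by uniqueness of the minimizer $v(x)=0$ and $\gamma(x)=0$. For (ii), if $x$ is not Pareto critical, I would pick a descent direction $\bar d\in\mathbb{R}^n$ with $\psi(x,\bar d)<0$ (which exists by the discussion following \eqref{fir_ord_opt}) and plug $t\bar d$ into $\varphi(x,\cdot)$; by positive homogeneity this yields the scalar quadratic $t\,\psi(x,\bar d) + \tfrac{1}{2}t^2\|\bar d\|^2$, whose minimum over $t>0$ is strictly negative. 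Therefore $\gamma(x)<0$, which forces $v(x)\neq 0$ (otherwise $\gamma(x)=\psi(x,0)+0=0$), and the identity $\gamma(x) = \psi(x,v(x)) + \tfrac{1}{2}\|v(x)\|^2 < 0$ rearranges to $\psi(x,v(x)) < -\tfrac{1}{2}\|v(x)\|^2 < 0$.

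The main obstacle is (iii), continuity of $v(\cdot)$. My plan is a standard argument by subsequences. First I would establish a local a priori bound on $\|v(x)\|$: from $\varphi(x,v(x))\le\varphi(x,0)=0$ and the Cauchy--Schwarz lower bound $\psi(x,v(x))\ge -\bigl(\max_{i\in\langle m\rangle}\|\nabla F_i(x)\|\bigr)\,\|v(x)\|$, one deduces $\|v(x)\|\le 2\,\max_{i\in\langle m\rangle}\|\nabla F_i(x)\|$, which is locally bounded because the gradients are continuous. Now take any sequence $x_k\to x$; by the bound, $\{v(x_k)\}$ is bounded and hence, along any subsequence, admits a further subsequence $v(x_{k_j})\to \tilde v$. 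Since $\psi$ is jointly continuous (being the maximum of finitely many continuous functions), $\varphi$ is jointly continuous, so passing to the limit in the minimality inequality $\varphi(x_{k_j},v(x_{k_j})) \le \varphi(x_{k_j},d)$ (valid for each fixed $d\in\mathbb{R}^n$) yields $\varphi(x,\tilde v)\le\varphi(x,d)$. Thus $\tilde v$ minimizes $\varphi(x,\cdot)$, and uniqueness forces $\tilde v = v(x)$. Because every subsequence has a further subsequence converging to the same limit $v(x)$, the whole sequence $v(x_k)$ converges to $v(x)$, proving continuity. The delicate step is the a priori bound; once it is in hand, joint continuity of $\psi$ and uniqueness of the minimizer do the rest.
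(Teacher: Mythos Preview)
Your argument is correct in all three parts. The a~priori bound $\|v(x)\|\le 2\max_{i\in\langle m\rangle}\|\nabla F_i(x)\|$ is exactly what is needed to make the subsequence-plus-uniqueness argument for (iii) go through, and your derivations of (i) and (ii) from the positive homogeneity of $\psi(x,\cdot)$ and the test point $d=0$ are clean.

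As for comparison: the paper does not actually prove this proposition. It is stated with a citation to Fliege and Svaiter~\cite{fliege2000steepest} and used as a black box throughout the analysis. Your proof is essentially the standard one from that original reference (Lemma~1 there), so there is no methodological divergence to discuss---you have simply supplied what the present paper omits.
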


\section{The first improvement scheme}\label{sec:3}

This section provides the first method for determining  the modification parameter $\theta_{k}$, which uses the approximation of the Hessian of the objective function by means of the diagonal matrix whose entries are appropriately computed.

 For the sake of simplicity, we abbreviate $\lambda_{i}(x^{k})$ as $\lambda_{i}^{k}$, $i\in\langle m\rangle$. In \eqref{new_iter}, we let 
\begin{equation}\label{theta1}
	\theta_{k}=\tau^{-1}_{k},
\end{equation}
then
$
x^{k+1}=x^{k}+t_{k}\tau^{-1}_{k}v(x^{k}).
$ If $F$ is twice continuously differentiable, then using the second-order Taylor expansion of $F_{i}$ at the point $x^{k+1}$, we get 
\begin{equation}\label{dsd_second_order}
	\begin{aligned}
		F_{i}(x^{k+1})= F_{i}(x^{k})+t_{k}\tau^{-1}_{k}\nabla F_{i}(x^{k})^{\top}v(x^{k})
		+\dfrac{1}{2}t_{k}^{2}\tau_{k}^{-2}v(x^{k})^{\top}\nabla^{2}F_{i}(\zeta)v(x^{k}),
	\end{aligned}
\end{equation}
for all $i\in\langle m\rangle$, where $\zeta\in\mathbb{R}^{n}$ lies in the line segment connecting $x^{k}$ and $x^{k+1}$, and it is defined by
\begin{equation*}
	\zeta=x^{k}+\xi(x^{k+1}-x^{k}),\quad\xi\in[0,1].
\end{equation*}
Considering the distance between $x^{k}$ and $x^{k+1}$ is small enough (using
the local character of searching), we can take $\xi=1$ and this results in $\zeta=x^{k+1}$. Multiplying by $\lambda_{i}^{k}$ and summing over  $i\in\langle m\rangle$ in \eqref{dsd_second_order}, one has
\begin{equation}\label{dsd_second_order1}
	\begin{aligned}
		\sum_{i=1}^{m}\lambda_{i}^{k}F_{i}(x^{k+1})=& \sum_{i=1}^{m}\lambda_{i}^{k}F_{i}(x^{k})+t_{k}\tau_{k}^{-1}\left(\sum_{i=1}^{m}\lambda_{i}^{k}\nabla F_{i}(x^{k})\right)^{\top}v(x^{k})\\
		&+\dfrac{1}{2}t_{k}^{2}\tau_{k}^{-2}v(x^{k})^{\top}\left(\sum_{i=1}^{m}\lambda_{i}^{k}\nabla^{2}F_{i}(x^{k+1})\right)v(x^{k})\\
		=& \sum_{i=1}^{m}\lambda_{i}^{k}F_{i}(x^{k})-t_{k}\tau_{k}^{-1}\|v(x^{k})\|^{2}\\
		&+\dfrac{1}{2}t_{k}^{2}\tau_{k}^{-2}v(x^{k})^{\top}\left(\sum_{i=1}^{m}\lambda_{i}^{k}\nabla^{2}F_{i}(x^{k+1})\right)v(x^{k}),
	\end{aligned}
\end{equation}
where the second equality follows from \eqref{dual_sol} with $x=x^{k}$. If we consider replacing the Hessian $\sum_{i=1}^{m}\lambda_{i}^{k}\nabla^{2}F_{i}(x^{k+1})$ in \eqref{dsd_second_order1} with $\tau_{k+1}I$, then
\begin{equation*}
	\begin{aligned}
		\sum_{i=1}^{m}\lambda_{i}^{k}F_{i}(x^{k+1})=
		\sum_{i=1}^{m}\lambda_{i}^{k}F_{i}(x^{k})-t_{k}\tau_{k}^{-1}\|v(x^{k})\|^{2}
		+\dfrac{1}{2}t_{k}^{2}\tau_{k+1}\tau_{k}^{-2}\|v(x^{k})\|^{2}.
	\end{aligned}
\end{equation*}
Directly following the aforementioned relation, we obtain
\begin{equation}\label{tau_k}			
	\tau_{k+1}=\dfrac{2\tau_{k}\left(\tau_{k}\displaystyle\sum_{i=1}^{m}\lambda^{k}_{i}(F_{i}(x^{k+1})-F_{i}(x^{k}))+t_{k}\|v(x^{k})\|^{2}\right)}{t_{k}^{2}\|v(x^{k})\|^{2}}.
\end{equation}

\begin{remark}\setstretch{1.25}\normalfont
	For $k=0$, we let $\tau_{k}=1$.	In general, it is essential to note that ensuring the non-negativity of the parameter $\tau_{k+1}$ is not always guaranteed. In cases where $\tau_{k+1}<0$, we take the corrective action of setting $\tau_{k+1}=1$. The rationale behind this choice is as follows: if $\sum_{i=1}^{m}\lambda_{i}^{k}\nabla^{2}F_{i}(x^{k+1})$ is a non-positive definite matrix, selecting $\tau_{k+1}=1$ leads to the next iterative point being effectively computed using the MSD algorithm, i.e., $x^{k+2}=x^{k+1}+t_{k}v(x^{k})$. Therefore, for all $k\geq0$, we have $\tau_{k}>0$.
\end{remark}

\begin{remark}\setstretch{1.25}\normalfont
	Notice that the authors in \cite{el2021accelerated} also utilize $\tau_{k+1}I$ to approximate $\sum_{i=1}^{m}\lambda_{i}^{k}\nabla^{2}F_{i}(x^{k+1})$ and subsequently determine the parameter $\tau_{k+1}$ by considering the modified secant equation. The specific value for the parameter involves the difference between adjacent iterative points and the gradient difference between these points, expressed as:
	\begin{equation*}
		\tau_{k+1}=\max\left\{\tau_{0},\dfrac{(\sum_{i=1}^{m}\lambda_{i}^{k}(\nabla F_{i}(x^{k+1})-\nabla F_{i}(x^{k})))^{\top}(x^{k+1}-x^{k})}{\|x^{k+1}-x^{k}\|_{2}^{2}}\right\},
	\end{equation*}
	where $\tau_{0}>0$ is a given initial parameter.
\end{remark}

In this paper, we consider the iterative stepsize $t_{k}$ using the Armijio line search technique, which is presented as follows: 

\noindent\textbf{Armijo.} Set $\rho\in(0,1)$ and $\delta\in(0,1)$. Choose $t_{k}$ as the largest value in $\{\delta^{0},\delta^{1},\delta^{2},\ldots\}$ such that
\begin{equation}\label{armijio}
	F(x^{k}+t_{k} d^{k})\preceq F(x^{k})+\rho t_{k} \psi(x^{k},d^{k})e,
\end{equation}
where $d^{k}$ is the descent direction and $e=(1,1,\ldots,1)^{\top}\in\mathbb{R}^{m}$. 

\begin{remark}\setstretch{1.25}\normalfont
	Let $d^{k}=\tau^{-1}_{k}v(x^{k})$. From \cite[Lemma 4]{fliege2000steepest} and the relation $$JF(x^{k})d^{k}\preceq\psi(x^{k},d^{k})e\preceq\tau^{-1}_{k}\psi(x^{k},v(x^{k}))e\preceq0,$$ it follows that the above Armijo's stepsize strategy is well-defined.
\end{remark}

Building upon the preceding discussions, we now present the first modified version of the MSD algorithm for solving problem \eqref{mop}, denoted as MSD-I.

\begin{algorithm}\setstretch{1.25}
	\small\caption{MSD-I}\label{algo1}
	\begin{algorithmic}[1]
		\Require Initial point $x^{0}$ and $\tau_{0}=1$.
		\For{$k=0,1,2,\ldots$}
		\State Compute $\lambda^{k}$ by solving problem \eqref{dual}, and obtain the direction $v(x^{k})$ using \eqref{dual_sol};
		\If{$\|v(x^{k})\|=0$}
		\State \textbf{Return}
		Pareto critical point $x^{k}$;
		\EndIf
		\State Compute $d^{k}=\tau_{k}^{-1}v(x^{k})$, and find $t_{k}>0$ by the Armijo stepsize strategy \eqref{armijio};
		\State Update $x^{k+1}=x^{k}+t_{k}d^{k}$;
		\State Determine $\tau_{k+1}$ using \eqref{tau_k};
		\If{$\tau_{k+1}<0$}
		\State $\tau_{k+1}=1$;
		\EndIf
		\EndFor
	\end{algorithmic}
\end{algorithm}

We next discuss the convergence properties of the MSD-I algorithm. Before proceeding, we need to make some assumptions:

\begin{description}
	\item[\rm\textbf{(A1)}] $F$ is bounded below on the set $\mathcal{L}=\{x\in\mathbb{R}^{n}:F(x)\preceq F(x^{0})\}$, where $x^{0}\in\mathbb{R}^{n}$ is a given initial point.
	\item[\rm\textbf{(A2)}] Each $\nabla F_{i}$ is Lipschitz continuous with $L_{i}>0$ on an open set $\mathcal{B}$ containing $\mathcal{L}$, i.e., $\|\nabla F_{i}(x)- \nabla F_{i}(y)\|\leq L_{i}\|x-y\|$ for all $x,y\in\mathcal{B}$ and $i\in\langle m\rangle$.
	\item[\rm\textbf{(A3)}] The sequence $\{\tau_{k}\}$ has an upper bound, i.e., there exists a real number $b>0$ such that $\tau_{k}\leq b$ for all $k$.
\end{description}

The following lemma gives the value of the iterative decrease of the objective function $F$ when the MSD-I algorithm is applied. In the sequel, let $L=\max_{i\in\langle m\rangle}L_{i}$.

\begin{lemma}\setstretch{1.25}\label{mdsd_vk_bounds}
	Assume that (A2) and (A3) hold. Let $\{x^{k}\}$ be the sequence  produced by the MSD-I algorithm. Then, for all $k\geq0$, we have
	\begin{equation}\label{mdsd_vk_bounds_0}
		F(x^{k})-F(x^{k+1})\succeq \omega\|v(x^{k})\|^{2}e,
	\end{equation}
	where 
	$\omega=\min\{\rho/(2b),\rho\delta (1-\rho)/(4L)\}$.
\end{lemma}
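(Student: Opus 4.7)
The plan is to combine the Armijo descent inequality with a uniform lower bound on the ratio $t_k/\tau_k$. Starting from the line-search rule \eqref{armijio} with $d^k = \tau_k^{-1} v(x^k)$, I would write
$$F(x^k) - F(x^{k+1}) \succeq -\rho\, t_k\, \psi(x^k, d^k)\, e.$$
Invoking positive homogeneity of $\psi(x^k,\cdot)$ to pull out $\tau_k^{-1}$, and then Proposition \ref{pa_sta_equ}(ii) in the form $\psi(x^k, v(x^k)) \leq -\|v(x^k)\|^2/2$ (with equality in the critical case), this reduces to
$$F(x^k) - F(x^{k+1}) \succeq \frac{\rho\, t_k}{2\,\tau_k}\, \|v(x^k)\|^2\, e.$$
Hence it suffices to bound $t_k/\tau_k$ from below by a positive constant independent of $k$.

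I would next split on whether backtracking occurred. If $t_k = 1$, assumption (A3) gives $t_k/\tau_k \geq 1/b$, contributing $\rho/(2b)$. If $t_k < 1$, then the previous trial $t_k/\delta$ must fail \eqref{armijio} at some index $i \in \langle m\rangle$, so
$$F_i(x^k + (t_k/\delta)d^k) > F_i(x^k) + \rho(t_k/\delta)\psi(x^k,d^k).$$
Applying the quadratic upper bound afforded by the $L_i$-Lipschitz continuity of $\nabla F_i$ from (A2), combining with $\nabla F_i(x^k)^\top d^k \leq \psi(x^k,d^k)$, and simplifying yields
$$t_k > \frac{2\delta(1-\rho)\bigl(-\psi(x^k,d^k)\bigr)}{L_i\, \|d^k\|^2}.$$
Expanding $\psi(x^k,d^k) = \tau_k^{-1}\psi(x^k,v(x^k))$ and $\|d^k\|^2 = \tau_k^{-2}\|v(x^k)\|^2$, and using Proposition \ref{pa_sta_equ}(ii) again, the right-hand side simplifies to $\delta(1-\rho)\tau_k/L_i \geq \delta(1-\rho)\tau_k/L$, giving $t_k/\tau_k \geq \delta(1-\rho)/L$ and hence the contribution $\rho\delta(1-\rho)/(2L)$, which in particular majorizes the stated $\rho\delta(1-\rho)/(4L)$. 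Taking the minimum over the two cases delivers the constant $\omega$.

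The principal obstacle will be the intertwined role of $\tau_k$: it appears both as a scaling of the descent direction (passing through the homogeneity of $\psi$) and must be controlled via (A3) to produce a uniform lower bound on $t_k/\tau_k$ in the unit-step case. Once the $\tau_k^{-1}$ factors are tracked carefully through both the Armijo inequality and the Lipschitz-based estimate, the two cases fit together cleanly and the claimed vector inequality \eqref{mdsd_vk_bounds_0} follows.
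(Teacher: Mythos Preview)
Your argument is correct and follows essentially the same strategy as the paper: split on whether $t_k=1$ or $t_k<1$, handle the first case via (A3), and in the second case exploit the failure of the Armijo condition at the previous trial step together with the Lipschitz continuity of $\nabla F_i$. The only minor difference is that you invoke the descent-lemma quadratic upper bound instead of the paper's mean-value theorem plus Cauchy--Schwarz, which yields the sharper constant $\rho\delta(1-\rho)/(2L)$ in the backtracking case (as you note, this majorizes the stated $\rho\delta(1-\rho)/(4L)$).
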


\begin{proof}
	We have the following two cases.
	
	\emph{Case 1.} Let $\mathcal{K}_{1}=\{k:t_{k}=1\}$. From \eqref{armijio}, and taking into account the relation $\psi(x^{k},v(x^{k}))<-\|v(x^{k})\|^{2}/2$, for every $i\in\langle m\rangle$ and all $k\in\mathcal{K}_{1}$, we have
	\begin{equation}\label{mdsd_vk_bounds_1}
		\begin{aligned}
			F_{i}(x^{k})-F_{i}(x^{k+1})\geq-\rho \psi(x^{k},d^{k})
			=-\rho\tau_{k}^{-1} \psi(x^{k},v(x^{k}))
			\geq\frac{\rho}{2\tau_{k}}\|v(x^{k})\|^{2}
			\geq \frac{\rho}{2b}\|v(x^{k})\|^{2}.
		\end{aligned}
	\end{equation}
	
	\emph{Case 2.} Let $\mathcal{K}_{2}=\{k:t_{k}<1\}$. Obviously, $t_{k}/\delta\leq1$ for $k\in\mathcal{K}_{2}$. Let $\tilde{t}=t_{k}/\delta$. By the way $t_{k}$ is chosen in Armijo stepsize \eqref{armijio}, it follows that $\tilde{t}$ fails to
	satisfy \eqref{armijio}, i.e., for all $k\in\mathcal{K}_{2}$, 
	\begin{equation*}
		F(x^{k}+\tilde{t} d^{k})\npreceq F(x^{k})+\rho\tilde{t} \psi(x^{k},d^{k})e,
	\end{equation*}
	which means that
	\begin{equation*}
		F_{i_{k}}(x^{k}+\tilde{t} d^{k})- F_{i_{k}}(x^{k})>\rho \tilde{t} \psi(x^{k},d^{k})
	\end{equation*}
	for at least one index $i_{k}\in\langle m\rangle$. Using the mean-value theorem on the above inquality, there exists $\nu_{k}\in[0,1]$ such that
	\begin{equation*}\label{armijio_dk_bounds3}
		\langle\nabla F_{i_{k}}(x^{k}+\tilde{t}\nu_{k}d^{k}),d^{k}\rangle>\rho  \psi(x^{k},d^{k}).
	\end{equation*}
	This together with the definition of $\psi(\cdot,\cdot)$ gives us
	\begin{equation}\label{armijio_dk_bounds5}
		\begin{aligned}
			(\rho-1) \psi(x^{k},d^{k})&<\langle\nabla F_{i_{k}}(x^{k}+\tilde{t}\nu_{k}d^{k}),d^{k}\rangle- \psi(x^{k},d^{k})\\
			&\leq\langle\nabla F_{i_{k}}(x^{k}+\tilde{t}\nu_{k}d^{k}),d^{k}\rangle-\langle\nabla F_{i_{k}}(x^{k}),d^{k}\rangle.
		\end{aligned}
	\end{equation}
	By Cauchy-Schwarz inequality and (A2), for all $k\in\mathcal{K}_{2}$, we get
	\begin{equation}\label{armijio_dk_bounds6}
		\begin{aligned}
			\langle\nabla F_{i_{k}}(x^{k}+t\nu_{k}d^{k}),d^{k}\rangle-\langle\nabla F_{i_{k}}(x^{k}),d^{k}\rangle&\leq\|\nabla F_{i_{k}}(x^{k}+\tilde{t}\nu_{k}d^{k})-\nabla F_{i_{k}}(x^{k})\|\|d^{k}\|\\
			&\leq L_{i_{k}} \tilde{t}\nu_{k}\|d^{k}\|^{2}\\
			&\leq L \tilde{t}\|d^{k}\|^{2}.
		\end{aligned}
	\end{equation}
	Therefore, by \eqref{armijio_dk_bounds5} and \eqref{armijio_dk_bounds6}, for all $k\in\mathcal{K}_{2}$, we immediately have 
	\begin{equation*}
		t_{k}=\tilde{t}\delta\geq- \frac{\delta(1-\rho)}{L} \frac{\psi(x^{k},d^{k})}{\|d^{k}\|^{2}}.
	\end{equation*} 
	Replacing $d^{k}=\tau_{k}^{-1}v^{k}$ and observing $\psi(x^{k},v^{k})<-\|v(x^{k})\|^{2}/2$, for all $k\in\mathcal{K}_{2}$, we obtain
	\begin{equation*}\label{armijio_dk_bounds7}
		t_{k}\geq\frac{\delta(1-\rho)\tau_{k}}{2L}.
	\end{equation*} 
	Substituting this relation into \eqref{armijio}, for each $i\in\langle m\rangle$ and all $k\in\mathcal{K}_{2}$, one has
	\begin{equation}\label{armijio_dk_bounds8}
		\begin{aligned}
			F_{i}(x^{k})-F_{i}(x^{k+1})&\geq-\rho t_{k}\psi(x^{k},d^{k})\\
			&\geq-\frac{\delta\rho(1-\rho)\tau_{k}}{2L}\cdot\tau_{k}^{-1}\psi(x^{k},v(x^{k}))\\
			&\geq\frac{\delta\rho(1-\rho)}{4L}\|v(x^{k})\|^2.
		\end{aligned}
	\end{equation}
	It follows from \eqref{mdsd_vk_bounds_1} and \eqref{armijio_dk_bounds8} that the desired result \eqref{mdsd_vk_bounds_0} holds. 
\end{proof}

\begin{theorem}\setstretch{1.25}\label{convergence_analysis1}
	Assume that (A1)--(A3) hold. Let $\{x^{k}\}$ be the sequence produced by the MSD-I algorithm. Then $\lim_{k\rightarrow\infty}\|v(x^{k})\|=0$. 
\end{theorem}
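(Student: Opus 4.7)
The plan is to chain together Lemma \ref{mdsd_vk_bounds} with the boundedness-from-below assumption (A1) in the standard descent-method style, producing a summable series whose terms are $\|v(x^{k})\|^{2}$.

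First I would fix any component $i \in \langle m \rangle$ and apply the coordinate-wise version of Lemma \ref{mdsd_vk_bounds}, namely
\begin{equation*}
F_{i}(x^{k}) - F_{i}(x^{k+1}) \geq \omega \|v(x^{k})\|^{2}, \qquad k \geq 0.
\end{equation*}
In particular $\{F_{i}(x^{k})\}$ is non-increasing, so by induction $F(x^{k}) \preceq F(x^{0})$, i.e.\ the whole iterate sequence stays in the sublevel set $\mathcal{L}$. Assumption (A1) then guarantees that $F_{i}$ is bounded below along $\{x^{k}\}$, so $\{F_{i}(x^{k})\}$ is convergent.

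Next I would telescope: summing the displayed inequality from $k=0$ to $k=N$ yields
\begin{equation*}
\omega \sum_{k=0}^{N} \|v(x^{k})\|^{2} \leq F_{i}(x^{0}) - F_{i}(x^{N+1}) \leq F_{i}(x^{0}) - \inf_{x \in \mathcal{L}} F_{i}(x).
\end{equation*}
Letting $N \to \infty$, the right-hand side is a finite constant (using (A1)), and since $\omega > 0$ by construction (here I implicitly use that $b$ from (A3) is finite and $L$ from (A2) is finite), I conclude $\sum_{k=0}^{\infty} \|v(x^{k})\|^{2} < \infty$. The tail of a convergent series of non-negative numbers forces the general term to zero, giving $\lim_{k \to \infty} \|v(x^{k})\| = 0$, as required.

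The argument is essentially routine once Lemma \ref{mdsd_vk_bounds} is in hand; the main subtlety I would be careful about is just verifying the iterates stay in $\mathcal{L}$ so that (A1) and (A2) are applicable, and checking that $\omega$ is strictly positive (which relies critically on the uniform upper bound $b$ from (A3), since otherwise the $\rho/(2b)$ term in $\omega$ would vanish and no uniform descent estimate would survive). No compactness or continuity of $v(\cdot)$ is needed for this first-order conclusion — that would only come in if one wished to upgrade to convergence of the iterates themselves to a Pareto critical point.
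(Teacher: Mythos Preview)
Your proof is correct and follows essentially the same approach as the paper: both invoke Lemma \ref{mdsd_vk_bounds} together with (A1) to force $\|v(x^{k})\|\to 0$. The paper phrases the conclusion via monotone convergence (bounded monotone $\{F_i(x^k)\}$ implies successive differences vanish), whereas you telescope and obtain the mildly stronger fact $\sum_k \|v(x^k)\|^2<\infty$; this is a cosmetic difference, not a substantive one.
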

\begin{proof}
	Since $F$ is bounded below, and the sequence ${F(x^{k})}$ is demonstrated to be monotonically decreasing as shown in \eqref{mdsd_vk_bounds_0}, it follows that
	$$\lim_{k\rightarrow\infty}(F(x^{k})-F(x^{k+1}))=0,$$
	which, in conjunction with \eqref{mdsd_vk_bounds_0}, leads us to the desired conclusion.
\end{proof}

\begin{remark}\setstretch{1.25}\normalfont
	A direct consequence of Theorem \ref{convergence_analysis1} and Proposition \ref{pa_sta_equ}(i) and (iii) is
	that every limit point of $\{x_{k}\}$ generated by the MSD-I algorithm is a Pareto critical point.
\end{remark}

In the rest of this section, we discuss the convergence rate of the MSD-I algorithm.

\begin{description}\setstretch{1.25}
	\item[\rm\textbf{(A4)}] There exist constants $a$ and $b$ satisfying $0<a\leq1\leq b$ such that
	\begin{equation}\label{strong_con}
		a\|z\|^{2}\leq z^{\top}\nabla^{2}F_{i}(x) z\leq b\|z\|^{2}\quad \text{for~all}~ x\in\mathcal{L}, z\in\mathbb{R}^{n}, i\in\langle m\rangle.
	\end{equation}
\end{description}

\begin{remark}\setstretch{1.25}\normalfont\label{a4}
	Assumption (A4) means that $F_{i}(\cdot)$ is strongly convex with a common modulus $a>0$ for all $i\in\langle m\rangle$. Assumption (A4) also implies assumptions (A1)--(A3). Furthermore, from \cite[Theorem 5.24]{beck2017}, it follows that
	$$(\nabla F_{i}(x)-\nabla F_{i}(y))^{\top}(x-y)\geq a\|x-y\|^{2}$$
	for all $i\in\langle m\rangle$ and $x,y\in\mathbb{R}^{n}$, and thus the inequality $a\leq L$ holds.
\end{remark}

\begin{theorem}\setstretch{1.25}
	Suppose that (A4) holds. Let $\{x^{k}\}$ be the sequence generated by the MSD-I algorithm and $\bar{x}$ be the Pareto optimal limit point of the sequence associated with the multiplier vector $\bar{\lambda}$. Then, $\{x^{k}\}$ converges R-linearly to $\bar{x}$.
\end{theorem}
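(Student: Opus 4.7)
The plan is to find a scalar merit function whose Q-linear decay transfers to an R-linear rate for $\|x^{k}-\bar x\|$, and the natural candidate is
\begin{equation*}
u(x) \;:=\; \sum_{i=1}^{m}\bar\lambda_{i}F_{i}(x).
\end{equation*}
Since $\bar x$ is Pareto critical with associated multiplier vector $\bar\lambda$, the dual representation \eqref{dual_sol} gives $\nabla u(\bar x)=\sum_{i}\bar\lambda_{i}\nabla F_{i}(\bar x)=-v(\bar x)=0$. Under (A4), $u$ inherits strong convexity with modulus $a$ and a Lipschitz gradient with constant $L=\max_{i\in\langle m\rangle}L_{i}$, so $\bar x$ is the unique minimizer of $u$ and
\begin{equation*}
\tfrac{a}{2}\|x^{k}-\bar x\|^{2} \;\leq\; u(x^{k})-u(\bar x) \;\leq\; \tfrac{L}{2}\|x^{k}-\bar x\|^{2}.
\end{equation*}
Any Q-linear contraction $u(x^{k+1})-u(\bar x)\leq q\,(u(x^{k})-u(\bar x))$ with $q\in(0,1)$ therefore yields $\|x^{k}-\bar x\|=O(q^{k/2})$, i.e., R-linear convergence of the iterates.

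Combining Lemma~\ref{mdsd_vk_bounds} with $\sum_{i}\bar\lambda_{i}=1$ gives the primal descent $u(x^{k})-u(x^{k+1})\geq \omega\|v(x^{k})\|^{2}$. To close the iteration, I aim at a PL-type inequality
\begin{equation*}
\|v(x^{k})\|^{2} \;\geq\; 2\mu\,\bigl(u(x^{k})-u(\bar x)\bigr)
\end{equation*}
valid for all sufficiently large $k$ and some $\mu>0$; together with the descent estimate this yields $u(x^{k+1})-u(\bar x)\leq (1-2\omega\mu)(u(x^{k})-u(\bar x))$, the contraction that completes the argument.

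The delicate step, and the main obstacle, is this PL-type bound, because the dual multipliers $\lambda^{k}$ of subproblem \eqref{dual} at $x^{k}$ are in general distinct from $\bar\lambda$. A version with $\lambda^{k}$ in place of $\bar\lambda$ is routine: multiplying the strong-convexity inequality $F_{i}(\bar x)\geq F_{i}(x^{k})+\nabla F_{i}(x^{k})^{\top}(\bar x-x^{k})+\tfrac{a}{2}\|x^{k}-\bar x\|^{2}$ by $\lambda_{i}^{k}$, summing over $i$, substituting $v(x^{k})=-\sum_{i}\lambda_{i}^{k}\nabla F_{i}(x^{k})$, and applying Cauchy-Schwarz followed by AM-GM to the cross term $v(x^{k})^{\top}(\bar x-x^{k})$ yields
\begin{equation*}
\sum_{i=1}^{m}\lambda_{i}^{k}\bigl(F_{i}(x^{k})-F_{i}(\bar x)\bigr) \;\leq\; \tfrac{1}{2a}\|v(x^{k})\|^{2}.
\end{equation*}
To exchange $\lambda^{k}$ for $\bar\lambda$ I would rely on three ingredients: (i) the monotone decrease of $u$, together with Theorem~\ref{convergence_analysis1} and the uniqueness of $\bar x$ as minimizer of $u$, to force $x^{k}\to\bar x$; (ii) upper semicontinuity of the argmin map of the strongly convex dual \eqref{dual} and uniqueness of $\bar\lambda$ (implicit in the theorem statement) to conclude $\lambda^{k}\to\bar\lambda$; and (iii) a second-order Taylor expansion of each $F_{i}$ at $\bar x$ showing that $\sum_{i}(\lambda_{i}^{k}-\bar\lambda_{i})(F_{i}(x^{k})-F_{i}(\bar x))$ is of lower order and, after an AM-GM split, can be absorbed into the dominant quantities $\|v(x^{k})\|^{2}$ and $u(x^{k})-u(\bar x)$. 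This absorption, which pins down an explicit constant $\mu>0$, is the technical heart of the proof.
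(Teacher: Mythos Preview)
Your overall architecture---defining $u(x)=\sum_{i}\bar\lambda_{i}F_{i}(x)$, establishing the sandwich $\tfrac{a}{2}\|x^{k}-\bar x\|^{2}\le u(x^{k})-u(\bar x)\le\tfrac{b}{2}\|x^{k}-\bar x\|^{2}$, deducing Q-linear decay of $u(x^{k})-u(\bar x)$ from Lemma~\ref{mdsd_vk_bounds} plus a PL-type bound, and reading off R-linear convergence of the iterates---is exactly the paper's strategy. The divergence, and the gap, is in how you propose to obtain the PL inequality $\|v(x^{k})\|^{2}\ge 2\mu\bigl(u(x^{k})-u(\bar x)\bigr)$.

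Your plan (ii)--(iii) is problematic. First, uniqueness of $\bar\lambda$ is not implied by the hypotheses: the dual \eqref{dual} at $\bar x$ may have a nontrivial solution set, so $\lambda^{k}\to\bar\lambda$ need not hold. Second, even granting $\lambda^{k}\to\bar\lambda$, the absorption in (iii) does not go through at the stated level of detail. Expanding $F_{i}(x^{k})-F_{i}(\bar x)$ about $\bar x$ and using $\sum_{i}\bar\lambda_{i}\nabla F_{i}(\bar x)=0$, the cross term equals
\[
\sum_{i}(\bar\lambda_{i}-\lambda_{i}^{k})\bigl(F_{i}(x^{k})-F_{i}(\bar x)\bigr)
= -\Bigl(\sum_{i}\lambda_{i}^{k}\nabla F_{i}(\bar x)\Bigr)^{\!\top}(x^{k}-\bar x)+O(\|x^{k}-\bar x\|^{2}),
\]
and $\bigl\|\sum_{i}\lambda_{i}^{k}\nabla F_{i}(\bar x)\bigr\|\le L\|x^{k}-\bar x\|+\|v(x^{k})\|$. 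Feeding this back and bounding $\|x^{k}-\bar x\|^{2}$ via $\tfrac{2}{a}\bigl(u(x^{k})-u(\bar x)\bigr)$ produces a coefficient of at least $2L/a\ge 2$ in front of $u(x^{k})-u(\bar x)$ on the right, so the term cannot be moved to the left with a positive residual; no choice of the AM--GM parameter rescues this. In short, the error is not ``of lower order'' relative to $u(x^{k})-u(\bar x)$ without an additional estimate linking $\|x^{k}-\bar x\|$ directly to $\|v(x^{k})\|$.

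The paper obtains precisely that missing estimate, and it does so without touching $\lambda^{k}$ versus $\bar\lambda$ at all. Because $\{F(x^{k})\}$ is componentwise nonincreasing (Lemma~\ref{mdsd_vk_bounds}) and $\bar x$ is a limit point, one has $F_{i}(\bar x)\le F_{i}(x^{k})$ for every $i$. Hence in your own strong-convexity inequality (with weights $\lambda^{k}$) the term $\sum_{i}\lambda_{i}^{k}\bigl(F_{i}(\bar x)-F_{i}(x^{k})\bigr)$ is nonpositive and can simply be dropped, giving
\[
\tfrac{a}{2}\|x^{k}-\bar x\|^{2}\le v(x^{k})^{\top}(\bar x-x^{k})\le\|v(x^{k})\|\,\|x^{k}-\bar x\|,
\qquad\text{i.e.}\qquad
\|x^{k}-\bar x\|\le \tfrac{2}{a}\|v(x^{k})\|.
\]
Combining this with your upper sandwich $u(x^{k})-u(\bar x)\le\tfrac{b}{2}\|x^{k}-\bar x\|^{2}$ yields $u(x^{k})-u(\bar x)\le \tfrac{2b}{a^{2}}\|v(x^{k})\|^{2}$, i.e.\ the PL inequality with $\mu=a^{2}/(4b)$, for \emph{all} $k$ and with explicit constants. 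The rest of the proof is then exactly as you outlined. So the fix is a one-line monotonicity observation that replaces your steps (ii)--(iii) entirely.
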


\begin{proof}
	Considering the second-order Taylor expansion of $F_{i}(x)$ around $x^{k}$ and using \eqref{strong_con}, we have
	\begin{equation*}
		\dfrac{a}{2}\|x-x^{k}\|^{2}\leq F_{i}(x)-F_{i}(x^{k})-\nabla F_{i}(x^{k})^{\top}(x-x^{k})
	\end{equation*}
	for all $ i\in\langle m\rangle$. Substituting $x=\bar{x}$ in the above inequality and observing that $\lambda^{k}=(\lambda_{1}^{k},\lambda_{2}^{k},\ldots,\lambda_{m}^{k})\in\varLambda^{m}$ and \eqref{dual_sol}, we obtain
	\begin{equation}\label{rate1}
		\dfrac{a}{2}\|\bar{x}-x^{k}\|^{2}\leq \sum_{i=1}^{m}\lambda_{i}^{k}F_{i}(\bar{x})-\sum_{i=1}^{m}\lambda_{i}^{k}F_{i}(x^{k})+v(x^{k})^{\top}(\bar{x}-x^{k}).
	\end{equation}
	According to Lemma \ref{mdsd_vk_bounds}, we have that $F_{i}(\bar{x})\leq F_{i}(x^{k})$ for all $i\in\langle m\rangle$. Therefore,  $$\sum_{i=1}^{m}\lambda_{i}^{k}F_{i}(\bar{x})-\sum_{i=1}^{m}\lambda_{i}^{k}F_{i}(x^{k})\leq0.$$
	This, combined with \eqref{rate1} and the Cauchy-Schwartz inequality, yields
	\begin{equation*}
		\dfrac{a}{2}\|\bar{x}-x^{k}\|^{2}\leq v(x^{k})^{\top}(\bar{x}-x^{k})\leq\|v(x^{k})\|\|\bar{x}-x^{k}\|,
	\end{equation*}
	which implies that 
	\begin{equation}\label{rate2}
		\dfrac{a}{2}\|\bar{x}-x^{k}\|\leq\|v(x^{k})\|.
	\end{equation}
	On the other hand, 	if we consider the second-order Taylor expansion of $F_{i}(x)$ around $\bar{x}$ and employing \eqref{strong_con}, then
	\begin{equation*}
		\dfrac{a}{2}\|x-\bar{x}\|^{2}\leq F_{i}(x)-F_{i}(\bar{x})-\nabla F_{i}(\bar{x})^{\top}(x-\bar{x})\leq \dfrac{b}{2}\|x-\bar{x}\|^{2}
	\end{equation*}
	for all $ i\in\langle m\rangle$. Letting $x=x^{k}$ in the above inequality, and observing that $\bar{\lambda}=(\bar{\lambda}_{1},\bar{\lambda}_{2},\ldots,\bar{\lambda}_{m})\in\varLambda^{m}$ and \eqref{dual_sol} as well as Proposition \ref{pa_sta_equ}(i), we get
	\begin{equation}\label{rate3}
		\dfrac{a}{2}\|x^{k}-\bar{x}\|^{2}\leq \sum_{i=1}^{m}\bar{\lambda}_{i}F_{i}(x^{k})-\sum_{i=1}^{m}\bar{\lambda}_{i}F_{i}(\bar{x})\leq \dfrac{b}{2}\|x^{k}-\bar{x}\|^{2}.
	\end{equation}
	It follows from \eqref{rate2} and \eqref{rate3} that
	\begin{equation}\label{rate4}
		\sum_{i=1}^{m}\bar{\lambda}_{i}F_{i}(x^{k})-\sum_{i=1}^{m}\bar{\lambda}_{i}F_{i}(\bar{x})\leq \dfrac{2b}{a^{2}}\|v(x^{k})\|^{2}.
	\end{equation}
	By \eqref{mdsd_vk_bounds_0}, we immediately have
	\begin{equation*}
		\sum_{i=1}^{m}\bar{\lambda}_{i}F_{i}(x^{k+1})\leq \sum_{i=1}^{m}\bar{\lambda}_{i}F_{i}(x^{k})- \omega\|v(x^{k})\|^{2},
	\end{equation*}
	By subtracting the term $\sum_{i=1}^{m}\bar{\lambda}_{i}F_{i}(\bar{x})$ from both sides of the above inequality and taking into account \eqref{rate4}, yields
	\begin{equation}\label{rate5}
		\begin{aligned}
			\sum_{i=1}^{m}\bar{\lambda}_{i}F_{i}(x^{k+1})-\sum_{i=1}^{m}\bar{\lambda}_{i}F_{i}(\bar{x})&\leq \sum_{i=1}^{m}\bar{\lambda}_{i}F_{i}(x^{k})-\sum_{i=1}^{m}\bar{\lambda}_{i}F_{i}(\bar{x})- \omega\|v(x^{k})\|^{2},\\
			&\leq\left(1-\dfrac{wa^{2}}{2b}\right)\left(\sum_{i=1}^{m}\bar{\lambda}_{i}F_{i}(x^{k})-\sum_{i=1}^{m}\bar{\lambda}_{i}F_{i}(\bar{x})\right).
		\end{aligned}
	\end{equation}
	According to the setting of $\omega$ as in Lemma \ref{mdsd_vk_bounds} and Remark \ref{a4}, it is easy to see that $wa^{2}/2b\in(0,1)$. Therefore, recursively applying \eqref{rate5}, we have
	\begin{equation*}
		\begin{aligned}
			\sum_{i=1}^{m}\bar{\lambda}_{i}F_{i}(x^{k+1})-\sum_{i=1}^{m}\bar{\lambda}_{i}F_{i}(\bar{x})\leq\left(1-\dfrac{wa^{2}}{2b}\right)^{k+1}\left(\sum_{i=1}^{m}\bar{\lambda}_{i}F_{i}(x^{0})-\sum_{i=1}^{m}\bar{\lambda}_{i}F_{i}(\bar{x})\right),
		\end{aligned}
	\end{equation*}
	which, combined with the left-hand side of \eqref{rate3}, gives
	\begin{equation*}
		\begin{aligned}
			\|x^{k+1}-\bar{x}\|\leq\left(\dfrac{2}{a}\left(\sum_{i=1}^{m}\bar{\lambda}_{i}F_{i}(x^{0})-\sum_{i=1}^{m}\bar{\lambda}_{i}F_{i}(\bar{x})\right)\right)^{1/2}\left(\left(1-\dfrac{wa^{2}}{2b}\right)^{1/2}\right)^{k+1}.
		\end{aligned}
	\end{equation*}
	This means that $\{x^{k}\}$ converges R-linearly to $\bar{x}$.
\end{proof}

\section{The second improvement scheme}\label{sec:4}

In this section, we present the second method for choosing the parameter $\theta_{k}$, inspired by Andrei's work \cite{andrei2006acceleration}. For each $i\in\langle m\rangle$, assume that $F_{i}$ is convex and  twice continuously differentiable. Let us consider the second-order Taylor expansion of each function $F_{i}$ at the point $x^{k}+t_{k}v(x^{k})$, i.e.,
\begin{equation}\label{I1}
	\begin{aligned}
		F_{i}(x^{k}+t_{k}v(x^{k}))=&~ F_{i}(x^{k})+t_{k}\nabla F_{i}(x^{k})^{\top}v(x^{k})\\
		&+\dfrac{1}{2}t_{k}^{2}v(x^{k})^{\top}\nabla^{2}F_{i}(x^{k})v(x^{k})+o(\|t_{k}v(x^{k})\|^{2}).
	\end{aligned}
\end{equation}
Similarly, for $\theta>0$ and each $i\in\langle m\rangle$, we have
\begin{equation}\label{I2}
	\begin{aligned}
		F_{i}(x^{k}+\theta t_{k}v^{k})=&\, F_{i}(x^{k})+\theta t_{k}\nabla F_{i}(x^{k})^{\top}v(x^{k})\\
		&+\dfrac{1}{2}\theta^{2}t_{k}^{2}v(x^{k})^{\top}\nabla^{2}F_{i}(x^{k})v(x^{k})+o(\|\theta t_{k}v(x^{k})\|^{2}).
	\end{aligned}
\end{equation}
By \eqref{I1} and \eqref{I2}, we have
\begin{equation*}\label{}
	\begin{aligned}
		F_{i}(x^{k}+\theta t_{k}v(x^{k}))=&\, F_{i}(x^{k}+t_{k}v^{k})+(\theta-1) t_{k}\nabla F_{i}(x^{k})^{\top}v(x^{k})\\
		&+\dfrac{1}{2}(\theta^{2}-1)t_{k}^{2}v(x^{k})^{\top}\nabla^{2}F_{i}(x^{k})v(x^{k})\\
		&+o(\|\theta t_{k}v(x^{k})\|^{2})
		-o(\| t_{k}v(x^{k})\|^{2}).
	\end{aligned}
\end{equation*}
Multiplying by $\lambda_{i}^{k}$ and summing over  $i\in\langle m\rangle$ in the above equation, and observing that the relation \eqref{dual_sol}, one has
\begin{equation}\label{I3}
	\sum_{i=1}^{m}\lambda_{i}^{k}F_{i}(x^{k}+\theta t_{k}v(x^{k}))=\sum_{i=1}^{m}\lambda_{i}^{k}F_{i}(x^{k}+ t_{k}v(x^{k}))+\varphi_{k}(\theta),
\end{equation}
where 
\begin{equation*}
	\begin{aligned}
		\varphi_{k}(\theta)=\,&(1-\theta) t_{k}\|v(x^{k})\|^{2}+\dfrac{1}{2}(\theta^{2}-1)t_{k}^{2}v(x^{k})^{\top}\left(\sum_{i=1}^{m}\lambda_{i}^{k}\nabla^{2}F_{i}(x^{k})\right)v(x^{k})\\
		&+(\theta^{2} -1)t_{k}o(t_{k}\| v(x^{k})\|^{2}).
	\end{aligned}
\end{equation*}
Denote
\begin{align}
	p_{k} &= t_{k}\|v(x^{k})\|^{2}, \label{eq:pk} \\
	q_{k} &= t_{k}^{2}v(x^{k})^{\top}\left(\sum_{i=1}^{m}\lambda_{i}^{k}\nabla^{2}F_{i}(x^{k})\right)v(x^{k}) \label{eq:qk}
\end{align}
and $\epsilon_{k}=o(t_{k}\| v(x^{k})\|^{2})$. Since $F_{i}$ is convex and $\lambda_{i}^{k}\geq0$ for all $i\in\langle m\rangle$, then $q_{k}\geq0$. An equivalent expression of $\varphi_{k}(\theta)$ is
\begin{equation*}
	\varphi_{k}(\theta)=\left(\dfrac{1}{2}q_{k}+t_{k}\epsilon_{k}\right)\theta^{2}-p_{k}\theta+\left(p_{k}-\dfrac{1}{2}q_{k}-t_{k}\epsilon_{k}\right),\quad \theta>0.
\end{equation*}
Observe that $$\varphi'_{k}(\theta)=(q_{k}+2t_{k}\epsilon_{k})\theta-p_{k}$$ and $\varphi'_{k}(0)=-p_{k}<0$. Therefore, the optimal solution of $\varphi_{k}(\theta)$ is
\begin{equation}\label{I3_1}
	\bar{\theta}=\dfrac{p_{k}}{q_{k}+2t_{k}\epsilon_{k}}.
\end{equation}
The optimal value of $\varphi_{k}(\theta)$ at the point $\bar{\theta}$ is 
\begin{align}\label{I3_2}
	\varphi_{k}(\bar{\theta})=-\dfrac{(p^{k}-(q^{k}+2t_{k}\epsilon_{k}))^{2}}{2q^{k}+4t_{k}\epsilon_{k}}\leq0.
\end{align}
From \eqref{I3} and \eqref{I3_2}, and letting $\theta_{k}=\bar{\theta}$, we get
\begin{equation}\label{I3_3}
	\begin{aligned}
		\sum_{i=1}^{m}\lambda_{i}^{k}F_{i}(x^{k}+\theta_{k} t_{k}v(x^{k}))&=\sum_{i=1}^{m}\lambda_{i}^{k}F_{i}(x^{k}+ t_{k}v(x^{k}))+\varphi_{k}(\theta_{k})\\
		&\leq\sum_{i=1}^{m}\lambda_{i}^{k}F_{i}(x^{k}+ t_{k}v(x^{k})).
	\end{aligned}
\end{equation}
This implies that the value $\sum_{i=1}^{m}\lambda_{i}^{k}F_{i}(x^{k}+\theta_{k} t_{k}v(x^{k}))$ has a possible improvement.

At the point $x^{k}$, by the Armijo stepsize strategy \eqref{armijio}, we can find a $t_{k}\in(0,1]$ such that \eqref{armijio} with $d^{k}=v(x^{k})$ holds. Furthermore, by $\psi(x^{k},v(x^{k}))<-\|v(x^{k})\|^{2}/2$, we have
\begin{equation*}
	\begin{aligned}
		\sum_{i=1}^{m}\lambda^{k}_{i}F_{i}(x^{k}+t_{k} v(x^{k}))
		&\leq \sum_{i=1}^{m}\lambda^{k}_{i}F_{i}(x^{k})+\rho t_{k} \psi(x^{k},v(x^{k}))\\
		&\leq\sum_{i=1}^{m}\lambda^{k}_{i}F_{i}(x^{k})-\dfrac{\rho t_{k}}{2}\|v(x^{k})\|^{2},
	\end{aligned}
\end{equation*}
which, combined with \eqref{I3_3}, yields
\begin{equation}\label{I3_4}
	\begin{aligned}
		\sum_{i=1}^{m}\lambda_{i}^{k}F_{i}(x^{k}+\theta_{k} t_{k}v(x^{k}))&\leq\sum_{i=1}^{m}\lambda^{k}_{i}F_{i}(x^{k})-\left[\dfrac{\rho p_{k}}{2}+\dfrac{(p^{k}-(q^{k}+2t_{k}\epsilon_{k}))^{2}}{2q^{k}+4t_{k}\epsilon_{k}}\right]\\
		&\leq \sum_{i=1}^{m}\lambda^{k}_{i}F_{i}(x^{k}).
	\end{aligned}
\end{equation}
If we neglect the contribution of $\epsilon_{k}$ in the above relation, then an improvement of the function value $\sum_{i=1}^{m}\lambda_{i}^{k}F_{i}(x^{k}+\theta_{k} t_{k}v(x^{k}))$ can be obtained as 
\begin{equation*}
	\begin{aligned}
		\sum_{i=1}^{m}\lambda_{i}^{k}F_{i}(x^{k}+\theta_{k} t_{k}v(x^{k}))
		&\leq\sum_{i=1}^{m}\lambda^{k}_{i}F_{i}(x^{k})-\left[\dfrac{\rho p_{k}}{2}+\dfrac{(p^{k}-q^{k})^{2}}{2q^{k}}\right]
		\leq \sum_{i=1}^{m}\lambda^{k}_{i}F_{i}(x^{k}).
	\end{aligned}
\end{equation*}

Because the definition of $q_{k}$ involves Hessian information, this presents a challenge for practical computation. Now, we give a way for computing $q_{k}$. Considering the second-order Taylor expansion of each function $F_{i}$ ($i\in\langle m\rangle$) at the point $z^{k}=x^{k}+t_{k}v(x^{k})$, we get
\begin{equation*}
	\begin{aligned}
		F_{i}(z^{k})= F_{i}(x^{k})+t_{k}\nabla F_{i}(x^{k})^{\top}v(x^{k})
		+\dfrac{1}{2}t_{k}^{2}v(x^{k})^{\top}\nabla^{2}F_{i}(\beta_{1})v(x^{k}),
	\end{aligned}
\end{equation*}
where $\beta_{1}\in\mathbb{R}^{n}$ lies between $x^{k}$ and $x^{k+1}$. Multiplying by $\lambda_{i}^{k}$ and summing over $i\in\langle m\rangle$ in the above equation, one has
\begin{equation}\label{I4}
	\begin{aligned}
		\sum_{i=1}^{m}\lambda_{i}^{k}F_{i}(z^{k})=& \sum_{i=1}^{m}\lambda_{i}^{k}F_{i}(x^{k})+t_{k}\left(\sum_{i=1}^{m}\lambda_{i}^{k}\nabla F_{i}(x^{k})\right)^{\top}v(x^{k})\\
		&+\dfrac{1}{2}t_{k}^{2}v(x^{k})^{\top}\left(\sum_{i=1}^{m}\lambda_{i}^{k}\nabla^{2}F_{i}(\beta_{1})\right)v(x^{k}).
	\end{aligned}
\end{equation}
Similarly, at the point $x^{k}=z^{k}-t_{k}v(x^{k})$, we get
\begin{equation*}
	\begin{aligned}
		F_{i}(x^{k})= F_{i}(z^{k})-t_{k}\nabla F_{i}(z^{k})^{\top}v^{k}
		+\dfrac{1}{2}t_{k}^{2}v(x^{k})^{\top}\nabla^{2}F_{i}(\beta_{2})v(x^{k}),
	\end{aligned}
\end{equation*}
where $\beta_{2}\in\mathbb{R}^{n}$ is between $x^{k}$ and $z^{k}$. We can further obtain
\begin{equation}\label{I5}
	\begin{aligned}
		\sum_{i=1}^{m}\lambda_{i}^{k}F_{i}(x^{k})=& \sum_{i=1}^{m}\lambda_{i}^{k}F_{i}(z^{k})-t_{k}\left(\sum_{i=1}^{m}\lambda_{i}^{k}\nabla F_{i}(z^{k})\right)^{\top}v(x^{k})\\
		&+\dfrac{1}{2}t_{k}^{2}v(x^{k})^{\top}\left(\sum_{i=1}^{m}\lambda_{i}^{k}\nabla^{2}F_{i}(\beta_{2})\right)v(x^{k}).
	\end{aligned}
\end{equation}
Using the local character of searching and then considering the distance between $z^{k}$ and $x^{k}$ is small enough, we can take $\beta_{1}=\beta_{2}=x^{k}$. Therefore, by \eqref{I4}, \eqref{I5} and the definition of $q_{k}$ as in \eqref{eq:qk},  we obtain
\begin{equation}\label{qk_new}
	q_{k}=t_{k}\left(\sum_{i=1}^{m}\lambda_{i}^{k}(\nabla F_{i}(z^{k})-\nabla F_{i}(x^{k}))\right)^{\top}v(x^{k}).
\end{equation}
Observe that the computation of $q_{k}$ needs an additional evaluation of the gradient at the point $z^{k}$. If we ignore the influence of $\epsilon_{k}$ in \eqref{I3_1}, then $\theta_{k}=p_{k}/q_{k}$. 

After providing some discussions, we are now ready to describe the second modified version of the MSD algorithm for solving problem \eqref{mop}, which is called MSD-II.

\begin{algorithm}[H]\setstretch{1.25}
	\small\caption{MSD-II}\label{algo2}
	\begin{algorithmic}[1]
		\Require Initial point $x^{0}$.
		\For{$k=0,1,2,\ldots$}
		\State Compute $\lambda^{k}$ by solving problem \eqref{dual}, and obtain the direction $v(x^{k})$ using \eqref{dual_sol};
		\If{$\|v(x^{k})\|=0$}
		\State \textbf{Return} Pareto critical point $x^{k}$;
		\EndIf
		\State Find $t_{k}>0$ by the Armijo stepsize strategy \eqref{armijio} with $d^{k}=v(x^{k})$;
		\State Compute $z^{k}=x_{k}+t_{k}v(x^{k})$;
		\State Compute $p^{k}$ and $q^{k}$ according to \eqref{eq:pk} and \eqref{qk_new}, respectively;
		\State Compute $\theta_{k}=p_{k}/q_{k}$;
		\State Update $x^{k+1}=x^{k}+\theta_{k}t_{k}d^{k}$;
		\EndFor
	\end{algorithmic}
\end{algorithm}

\begin{remark}\setstretch{1.25}\label{rem5}\normalfont
	Note that the algorithm presented in \cite{andrei2006acceleration} exhibits linear convergence with a factor smaller than that of the classical SD algorithm.
	As for the MSD-II algorithm, we currently lack convergence results similar to those in \cite{andrei2006acceleration}, but its numerical effectiveness is evident in Section \ref{sec:5}. Therefore, we pose an open question in this paper: How to obtain the convergence and the rate of convergence of the MSD-II algorithm under suitable assumptions? 
\end{remark}

\section{Numerical experiments}\label{sec:5}

This section reports the results of numerical experiments, comparing the performance of the proposed algorithms with the MSD algorithm \cite[Algorithm 1]{fliege2000steepest} and multi-objective diagonal steepest descent (MDSD) algorithm \cite[Algorithm 1]{el2021accelerated}. All numerical experiments were implemented in MATLAB R2020b and executed on a personal desktop (Intel Core Duo i7-12700,  2.10 GHz, 32 GB of RAM). The optimization problem for determining the descent direction, e.g., problem \eqref{dual}, was solved by adopting the  conditional gradient algorithm \cite{beck2017}. 

In order to make the comparisons as fair as possible, we used the same line search strategy presented in \eqref{armijio} for all algorithms. The parameter values for the Armijo line search \eqref{armijio} were chosen as follows: $\rho=10^{-4}$ and $\delta=0.5$. In the MDSD algorithm, the initial parameter $\tau_{0}$ was set to $10^{-4}$. All runs were stopped whenever $\lvert\gamma(x^{k})\rvert$ is less than or equal $10^{-6}$. Since Proposition \ref{pa_sta_equ} implies that $v(x) = 0$ if and only if $\gamma(x) = 0$, this stopping criterion makes sense. The maximum number of allowed outer iterations was set to 1000.

We selected 32 test problems from the miltiobjective optimization literature, as shown in Table \ref{testpro}. The first two columns identify the problem name and the corresponding reference. ``Convex'' reports whether the problem is convex or not. Columns ``$m$'' and ``$n$'' inform the number of objective functions and the number of variables of the problems, respectively. The initial points were generated within a box defined by the lower and upper bounds, denoted as $x_{L}$ and $x_{U}$, respectively, as indicated in the last two columns. 
It is important to emphasize that the boxes presented in Table \ref{testpro} were exclusively used for specifying the initial points and were not considered by the algorithms during their execution. Note that MGH33 and TOI4 are adaptations of two single-objective optimization problems to the multi-objective setting that can be found in \cite{mita2019nonmonotone}. For the WIT suites, WIT1--WIT6  indicate the parameters in WIT are set to  $\lambda=0,0.5,0.9,0.99,0.999,1$, respectively (see \cite[Example 4.2]{witting2012numerical}). From a fairness point of view, the compared algorithms use the same 100 initial points on each given test problem, which are selected uniformly from the given upper and lower bounds. In Section \ref{sec:4}, we assume that the objective function $F$ is convex, which ensures that the term $q_{k}$ in \eqref{eq:qk} remains non-negative.  However, for non-convex objective functions, it is possible for $\theta_{k}=p_{k}/q_{k}<0$ to occur in the MSD-II algorithm. In such cases, our numerical experiments require setting $\theta_{k}=1$.

\begin{table}[htbp]\scriptsize
	\centering\small
	\renewcommand\arraystretch{1.5}
	\caption{List of Test Problems.}
		\begin{tabular}{lllllll}
			\hline
			Problem & Source  & Convex & $m$ & $n$ & $x_{L}$     &  $x_{U}$ \\
			\hline
			AP2 &   \cite{ansary2015modified}  &$\checkmark$ &  2  & 1    & $-100$ &100\\
			AP4 &   \cite{ansary2015modified} &$\checkmark$ &  3   & 3    & $(-10,-10,-10)$ & (10,10,10) \\
			BK1 &    \cite{huband2006review} &$\checkmark$ & 2   & 2    & $(-5,-5)$ & (10,10) \\
			DGO1 &    \cite{huband2006review} & $\times$ & 2  & 1    & $-10$ & 13\\
			DGO2 &    \cite{huband2006review}& $\checkmark$  & 2  & 1    & $-9$ & 9\\
			Far1 &    \cite{huband2006review} &$\times$  & 2  & 2    & $(-1,-1)$ & (1,1)\\
			FDS&    \cite{fliege2009newton}  & $\checkmark$  & 3 & 10    & $(-2,-2,...,-2)$ & (2,2,...,2)\\
			FF1 &    \cite{huband2006review} &$\times$  & 2  & 2    & $(-1,-1)$ & (1,1)\\
			Hil1 &    \cite{hil} & $\times$ & 2  & 2    & $(0,0)$ & (1,1)\\
			JOS1a&    \cite{jin2001dynamic} & $\checkmark$ & 2  & 50    & $(-100,-100,...,-100)$ & (100,100,...,100) \\
			JOS1b&    \cite{jin2001dynamic} & $\checkmark$ & 2  & 100    & $(-100,-100,...,-100)$ & (100,100,...,100) \\
			JOS1c&    \cite{jin2001dynamic} & $\checkmark$ & 2  & 1000    & $(-100,-100,...,-100)$ & (100,100,...,100) \\
			JOS1d&    \cite{jin2001dynamic} & $\checkmark$ & 2  & 5000    & $(-100,-100,...,-100)$ & (100,100,...,100) \\
			KW2 &   \cite{lovison2011singular} &$\times$   & 2 &2 &$ (-3,-3)$ & (3,3) \\
			Lov1 &   \cite{lovison2011singular} &$\checkmark$   & 2 &2 &$ (-10,-10)$ & (10,10) \\
			Lov3 &   \cite{lovison2011singular} & $\times$  & 2 &2 &$ (-20,-20)$ & (20,20) \\
			Lov4 &   \cite{lovison2011singular} & $\times$  & 2 &2 &$ (-20,-20)$ & (20,20) \\
			MGH33  &   \cite{more1981test} & $\checkmark$ &  10 & 10  &$ (-1,-1,...,-1)$ & (1,1,...,1) \\
			MHHM2 &   \cite{huband2006review}&$\checkmark$&  3    & 2  &$ (0,0)$ & (1,1) \\
			MLF1 &   \cite{huband2006review}&$\times$&  2    & 1  &0 &20 \\
			MLF2 &   \cite{huband2006review}&$\times$&  2    & 2  &$ (-100,-100)$ & (100,100) \\
			MMR1 &   \cite{mmr}&$\times$&  2    & 2  &$ (0.1,0)$ & (1,1) \\
			MOP3 &   \cite{mmr}&$\times$&  2    & 2  &$ (-\pi,-\pi)$ &$ (\pi,\pi)$ \\
			PNR &   \cite{preuss2006pareto} &$\checkmark$&  2   & 2   & $(-2,-2)$ & (2,2)\\
			SP1&    \cite{huband2006review}  & $\checkmark$& 2  & 2   & $(-100,-100)$ & (100,100)\\
			TOI4 &   \cite{toint1983test} &  $\checkmark$&  2 & 4   & $(-2,-2,-2,-2)$ & (2,2,2,2)\\
			WIT1  &    \cite{witting2012numerical}&$\checkmark$ &2   & 2     & $(-2,-2)$ & (2,2) \\
			WIT2  &   \cite{witting2012numerical}  & $\checkmark$ &2& 2     & $(-2,-2)$ & (2,2) \\
			WIT3  &  \cite{witting2012numerical} &$\checkmark$ &2   & 2     & $(-2,-2)$ & (2,2) \\
			WIT4 &   \cite{witting2012numerical}  &$\checkmark$&2  & 2     & $(-2,-2)$ & (2,2) \\
			WIT5  &   \cite{witting2012numerical}& $\checkmark$ &2  & 2     & $(-2,-2)$ & (2,2) \\
			WIT6&    \cite{witting2012numerical} &$\checkmark$ &2  & 2     & $(-2,-2)$ & (2,2) \\
			\hline
	\end{tabular}
	\label{testpro}%
\end{table}

Table \ref{results} summarizes the results obtained by Algorithms 1 and 2, comparing them with the MSD and MDSD algorithms. The table is organized into columns labeled ``it", ``fE", ``gE", ``T" and "\%''. The column ``it'' denotes the average number of iterations, while ``fE'' and ``gE'' stand for the average number of function and gradient evaluations, respectively. The column ``T'' means the average computational time (in seconds) to reach the critical point from an initial point, and ``\%'' indicates the percentage of runs that has reached a critical point. As observed in Table \ref{results}, failures can occur with the MSD algorithm when attempting to solve AP4, MMR1 and TOI4. In particular, for large-scale JOS1, the MSD algorithm struggles to converge within the maximum number of iterations. In comparison to the other three algorithms, the MDSD algorithm exhibits unsatisfactory performance on some problems like AP4 and FDS. However, its performance improves moderately and significantly over the MSD algorithm for most problems. The newly proposed MSD-I algorithm outperforms the MSD algorithm and demonstrates competitiveness with the MDSD algorithm. Notably, the MSD-II algorithm exhibits a significant advantage over the MSD algorithm, as well as the MDSD algorithm. Since WIT suites are controlled by the parameter $\lambda$, they fall into the category of scaled problems, potentially impacting algorithmic performance. Nevertheless, our algorithms perform well, especially the MSD-II algorithm, which exhibits robustness. The last column of Table \ref{results} represents the sum of the numerical results. As we can see, the MDSD algorithm outperforms the MSD algorithm in terms of ``it'' and ``gE'', but lags behind in ``fE'' and ``T''. This discrepancy could be attributed to the less robust initial parameters $\tau_{0}$ of the MDSD algorithm. Among all algorithms, the MSD-II algorithm emerges as the top-performing one, surpassing the MSD, MDSD and MSD-I algorithms in terms of ``it", ``fE", ``gE" and ``T". The MSD-I algorithm stands as the second-best one, outperforming the MSD and MDSD algorithms. In conclusion, the introduction of the adjusted parameter $\theta_{k}$ in this paper proves to be an effective tool for enhancing the convergence of the MSD algorithm.

\begin{sidewaystable}[htbp]
	\centering\setstretch{1.25}
	\caption{Performance of the proposed algorithms in comparison with the MSD and MDSD algorithms on the chosen set of test problems.}
	\resizebox{\linewidth}{!}{
		\begin{tabular}{lllllllllllllllllllll}
			\hline
			&         & \multicolumn{4}{l}{MSD}              & \multicolumn{5}{l}{MDSD}                     & \multicolumn{5}{l}{MSD-I}                    & \multicolumn{5}{l}{MSD-II}               \\\hline
			Problem & it      & fE      & gE      & T          & \%  & it     & fE      & gE     & T          & \%  & it     & fE      & gE     & T          & \%  & it   & fE     & gE    & T          & \%  \\\hline
			AP2     & 0.98    & 2.94    & 1.98    & 3.7508e-04 & 100 & 1.94   & 17.6    & 2.94   & 6.9159e-04 & 100 & 0.98   & 2.94    & 1.98   & 3.5444e-04 & 100 & 0.98 & 2.94   & 2.96  & 3.6132e-04 & 100 \\
			AP4     & 190.92  & 579.43  & 191.84  & 4.2999e-01 & 92  & 248.99 & 2419.44 & 249.99 & 2.4880e+01 & 100 & 158.87 & 1522.5  & 159.82 & 1.8599e-01 & 100 & 3.1  & 9.36   & 7.2   & 4.2519e-02 & 100 \\
			BK1     & 1       & 3       & 2       & 3.2900e-04 & 100 & 2      & 18      & 3      & 8.1037e-04 & 100 & 1      & 3       & 2      & 9.9512e-05 & 100 & 1    & 3      & 3     & 1.0717e-04 & 100 \\
			DGO1    & 2.29    & 4.58    & 3.29    & 6.0702e-04 & 100 & 1.22   & 9.03    & 2.22   & 4.8855e-04 & 100 & 2.21   & 4.42    & 3.21   & 1.9069e-04 & 100 & 1.56 & 3.12   & 4.12  & 1.5716e-04 & 100 \\
			DGO2    & 46.69   & 93.39   & 47.69   & 1.1373e-02 & 100 & 3.34   & 16.97   & 4.34   & 1.4392e-03 & 100 & 4.14   & 8.29    & 5.14   & 2.9462e-04 & 100 & 2.38 & 4.77   & 5.76  & 2.1747e-04 & 100 \\
			Far1    & 33.31   & 124.34  & 34.31   & 8.0649e-03 & 100 & 47.98  & 330.22  & 48.98  & 1.5936e-02 & 100 & 30.32  & 164.47  & 31.32  & 2.1340e-03 & 100 & 4.91 & 14.68  & 10.82 & 4.9329e-04 & 100 \\
			FDS     & 91.91   & 382.13  & 92.91   & 9.2474e+00 & 100 & 111    & 780.02  & 112    & 4.1956e+01 & 100 & 77.71  & 502.87  & 78.71  & 3.7979e+00 & 100 & 3.97 & 11.7   & 8.94  & 1.5460e-01 & 100 \\
			FF1     & 22.99   & 45.98   & 23.99   & 5.7183e-03 & 100 & 28.97  & 206.73  & 29.97  & 3.5952e-03 & 100 & 16.3   & 91.72   & 17.3   & 4.3484e-03 & 100 & 9.6  & 19.2   & 20.2  & 7.7916e-04 & 100 \\
			Hil1    & 9.19    & 42.25   & 10.19   & 9.1944e-04 & 100 & 6.88   & 43.13   & 7.88   & 8.2422e-04 & 100 & 7.58   & 28.71   & 8.58   & 1.7837e-03 & 100 & 3.71 & 16.05  & 8.42  & 2.9402e-04 & 100 \\
			JOS1a   & 229.64  & 459.28  & 230.64  & 6.7849e-02 & 100 & 2      & 12      & 3      & 4.6603e-04 & 100 & 2      & 4       & 3      & 7.0467e-04 & 100 & 1    & 2      & 3     & 1.8919e-04 & 100 \\
			JOS1b    & 861.94  & 1723.88 & 862.94  & 1.9745e-01 & 100 & 2      & 10      & 3      & 1.2870e-03 & 100 & 2      & 4       & 3      & 9.4530e-04 & 100 & 1    & 2      & 3     & 9.3505e-04 & 100 \\
			JOS1c    & 1000    & 2000    & 1000    & 4.6299e-01 & 100 & 2      & 8       & 3      & 8.4976e-04 & 100 & 2      & 4       & 3      & 7.2835e-04 & 100 & 1    & 2      & 3     & 6.0991e-04 & 100 \\
			JOS1d    & 1000    & 2000    & 1000    & 8.4425e-01 & 100 & 1      & 4       & 2      & 1.1221e-03 & 100 & 2      & 4       & 3      & 1.3631e-03 & 100 & 1    & 2      & 3     & 1.1348e-03 & 100 \\
			KW2     & 37.81   & 98.35   & 38.81   & 1.0758e-02 & 100 & 6.03   & 41.04   & 7.03   & 1.0176e+00 & 100 & 9.83   & 30.97   & 10.83  & 3.3048e-03 & 100 & 6.68 & 19.89  & 14.36 & 8.7466e-04 & 100 \\
			Lov1    & 2.98    & 8.74    & 3.98    & 8.1300e-04 & 100 & 2.79   & 19.58   & 3.79   & 3.4262e-04 & 100 & 2.84   & 6.48    & 3.84   & 7.3155e-04 & 100 & 2.18 & 6.14   & 5.36  & 2.0484e-04 & 100 \\
			Lov3    & 4.6     & 13.76   & 5.6     & 1.1896e-03 & 100 & 4.66   & 27.14   & 5.66   & 6.6703e-04 & 100 & 3.43   & 8.62    & 4.43   & 5.7345e-04 & 100 & 2.16 & 6.44   & 5.32  & 2.2877e-04 & 100 \\
			Lov4    & 1.36    & 4.07    & 2.36    & 3.7940e-04 & 100 & 4.19   & 27.57   & 5.19   & 7.1047e-04 & 100 & 1.27   & 3.62    & 2.27   & 4.2700e-04 & 100 & 1.11 & 3.32   & 3.22  & 1.1675e-04 & 100 \\
			MGH33   & 3       & 34.51   & 4       & 1.0540e-03 & 100 & 1.57   & 25.67   & 2.57   & 2.8003e-04 & 100 & 1.85   & 12.98   & 2.85   & 6.8230e-04 & 100 & 1    & 11.14  & 3     & 1.7039e-04 & 100 \\
			MHHM2   & 1       & 3       & 2       & 6.4357e-04 & 100 & 1.73   & 17.46   & 2.73   & 3.7389e-01 & 100 & 1      & 3       & 2      & 6.2975e-04 & 100 & 1    & 3      & 3     & 2.1667e-04 & 100 \\
			MLF1    & 0.58    & 1.16    & 1.58    & 1.9128e-04 & 100 & 0.4    & 2.33    & 1.4    & 3.5504e-04 & 100 & 0.58   & 1.16    & 1.58   & 3.6755e-04 & 100 & 0.79 & 1.58   & 2.58  & 7.8369e-05 & 100 \\
			MLF2    & 45.2    & 158.78  & 46.2    & 1.0854e-02 & 100 & 36.8   & 191.3   & 37.8   & 3.6271e-03 & 100 & 29.35  & 128.58  & 30.35  & 6.6559e-03 & 100 & 9.37 & 30.98  & 19.74 & 7.1169e-04 & 100 \\
			MMR1    & 15.12   & 44.77   & 16.11   & 3.7141e-03 & 99  & 3.52   & 25.09   & 4.52   & 5.7945e-04 & 100 & 3.9    & 12      & 4.9    & 5.9721e-04 & 100 & 3.26 & 10.67  & 7.52  & 3.3148e-04 & 100 \\
			MOP3    & 7.55    & 28.43   & 8.55    & 2.0552e-03 & 100 & 6.23   & 32.02   & 7.23   & 6.3641e-04 & 100 & 6.14   & 16.03   & 7.14   & 1.5225e-03 & 100 & 4.57 & 17.22  & 10.14 & 3.8289e-04 & 100 \\
			PNR     & 5.03    & 25.68   & 6.03    & 1.4600e-03 & 100 & 5.46   & 30.14   & 6.46   & 7.9231e-04 & 100 & 4.63   & 15.05   & 5.63   & 6.7535e-04 & 100 & 1.35 & 5.61   & 3.7   & 1.5677e-04 & 100 \\
			SP1     & 10.93   & 38.65   & 11.93   & 2.8252e-03 & 100 & 10.27  & 37.89   & 11.27  & 1.0384e-03 & 100 & 9.67   & 23.85   & 10.67  & 2.3389e-03 & 100 & 9.29 & 29.31  & 19.58 & 7.7312e-04 & 100 \\
			Toi4    & 183.42  & 367.96  & 184.32  & 4.2969e-02 & 90  & 3.24   & 20.31   & 4.24   & 5.7318e-04 & 100 & 3.54   & 7.18    & 4.54   & 5.7948e-04 & 100 & 2.37 & 5.4    & 5.74  & 2.2346e-04 & 100 \\
			WIT1    & 42      & 318.57  & 43      & 1.0899e-02 & 100 & 76.75  & 479.1   & 77.75  & 7.8088e-03 & 100 & 37.43  & 227.23  & 38.43  & 9.0981e-03 & 100 & 1.47 & 7.05   & 3.94  & 1.4425e-04 & 100 \\
			WIT2    & 67.98   & 516.23  & 68.98   & 1.7592e-02 & 100 & 77.16  & 473.85  & 78.16  & 7.7801e-03 & 100 & 61.18  & 360.88  & 62.18  & 1.4813e-02 & 100 & 1.85 & 10.99  & 4.7   & 1.8832e-04 & 100 \\
			WIT3    & 25.95   & 152.92  & 26.95   & 6.6296e-03 & 100 & 27.5   & 143.12  & 28.5   & 2.8450e-03 & 100 & 27.71  & 130.04  & 28.71  & 6.6641e-03 & 100 & 2.13 & 10.79  & 5.26  & 2.0369e-04 & 100 \\
			WIT4    & 6.26    & 26.1    & 7.26    & 1.6755e-03 & 100 & 7.22   & 34.69   & 8.22   & 7.4005e-04 & 100 & 5.39   & 15.94   & 6.39   & 1.3328e-03 & 100 & 1.91 & 7.23   & 4.82  & 1.9090e-04 & 100 \\
			WIT5    & 5.04    & 17.93   & 6.04    & 1.3009e-03 & 100 & 4.45   & 25.31   & 5.45   & 4.6433e-04 & 100 & 4.5    & 12.33   & 5.5    & 1.1216e-03 & 100 & 1.9  & 6.43   & 4.8   & 1.9835e-04 & 100 \\
			WIT6    & 1       & 3       & 2       & 3.1636e-04 & 100 & 1.99   & 17.98   & 2.99   & 2.6246e-04 & 100 & 1      & 3       & 2      & 3.1964e-04 & 100 & 1    & 3      & 3     & 1.7745e-04 & 100 \\ \hline
			Total & 3957.67 & 9323.81 & 3987.48 & 1.1395e+01 &     & 741.28 & 5546.73 & 773.28 & 6.8285e+01 &     & 522.35 & 3363.86 & 554.3  & 4.0493e+00 &     & 90.6 & 289.01 & 213.2 & 2.0797e-01 &    \\
			\hline
	\end{tabular}}
	\label{results}%
\end{sidewaystable}%

Next, we used the performance profile\footnote{The performance profiles in this paper were generated using the \texttt{MATLAB} code \texttt{perfprof.m}, which is freely available on the website \url{https://github.com/higham/matlab-guide-3ed/blob/master/perfprof.m}.} proposed in \cite{D_b2002} by Dolan and Mor\'{e} to analyze the performance of various algorithms. The performance profile has become a commonly used tool for assessing the performance of multiple solvers $\mathcal{S}$ when applied to a test set $\mathcal{P}$ in scalar optimization. It is worth noting that this tool is also utilized in multi-objective optimization; see \cite{mita2019nonmonotone,cocchi2020convergence,chen2023memory,lapucci2023limited,morovati2018quasi}. We provide a brief description of the tool. Let there be $n_{s}$ solvers and $n_{p}$ problems.  For $s\in\mathcal{S}$ and $p\in\mathcal{P}$, denote $o_{p,s}$ as the performance of solver $s$ on problem $p$. The performance ratio is defined as $z_{p,s}=o_{p,s}/\min\{o_{p,s}:s\in\mathcal{S}\}$ and the cumulative distribution function $\rho_{s}:[1,\infty)\rightarrow[0,1]$ is given by
$$\rho_{s}(\tau)=\frac{\left\lvert\left\{p\in\mathcal{P}:z_{p,s}\leq\tau\right\}\right\rvert}{n_{p}}.$$
The performance profile is presented by depicting the cumulative distribution function $\rho_{s}$. Notably, $\rho_{s}(1)$ represents the probability of the solver outperforming the remaining solvers. The right side of the image for $\rho_{s}$ illustrates the robustness associated with a solver. Fig. \ref{iter_feval_perfprof} displays the performance profiles for the number of iterations, the number of function evaluations, the number of gradient evaluations and the computational time. The values on the $x$-axis are converted to a $\log$ scale. As can be seen in Fig. \ref{iter_feval_perfprof}, our algorithms, especially the MSD-II algorithm, showcase unparalleled performance, outperforming both the MSD and MDSD algorithms. 

\begin{figure}[htbp]
	\centering
	\subfigure[Iterations]{
		\label{iter}
		\includegraphics[width=0.4\textwidth, keepaspectratio]{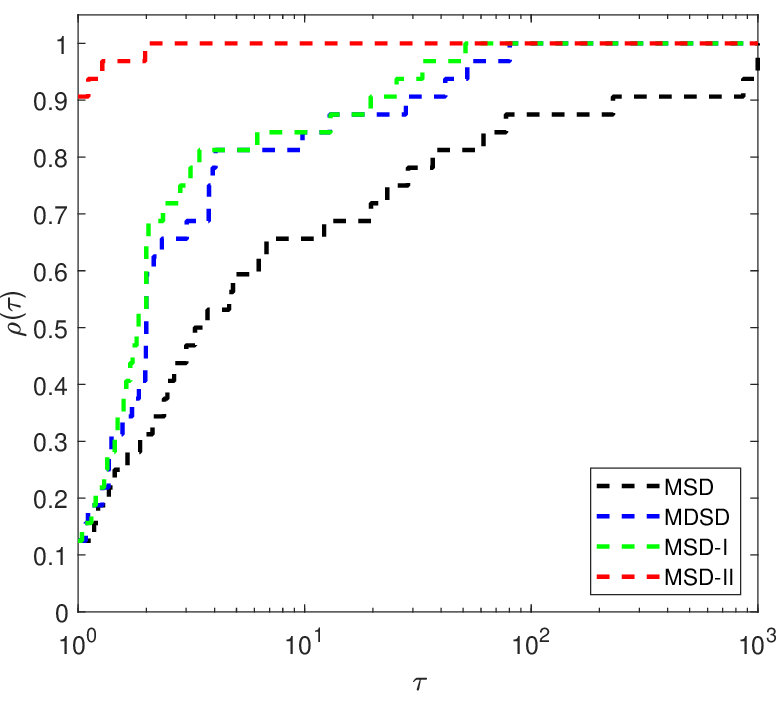}}
	\subfigure[Function evaluations]{
		\label{feval}
		\includegraphics[width=0.4\textwidth, keepaspectratio]{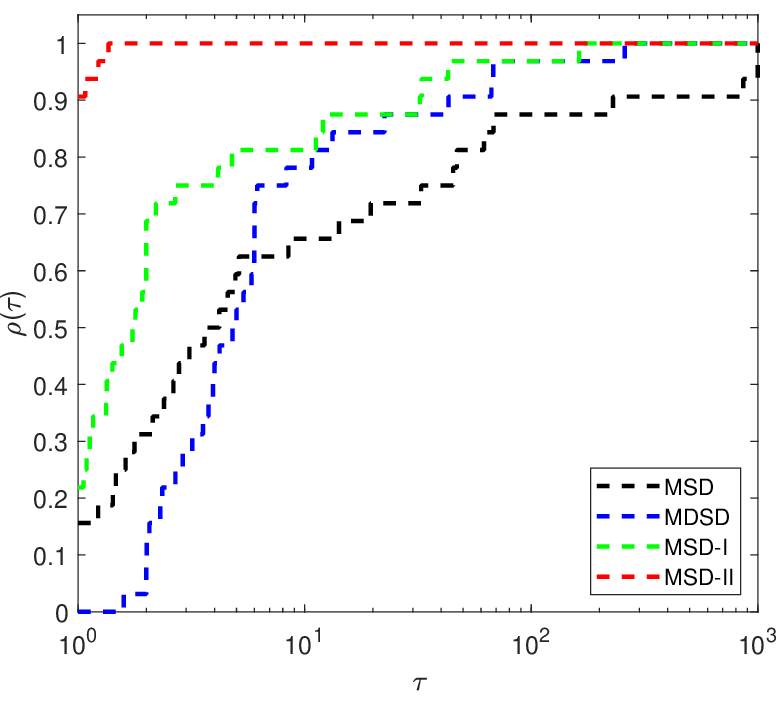}}
	\subfigure[Gradient evaluations]{
		\label{geval}
		\includegraphics[width=0.4\textwidth, keepaspectratio]{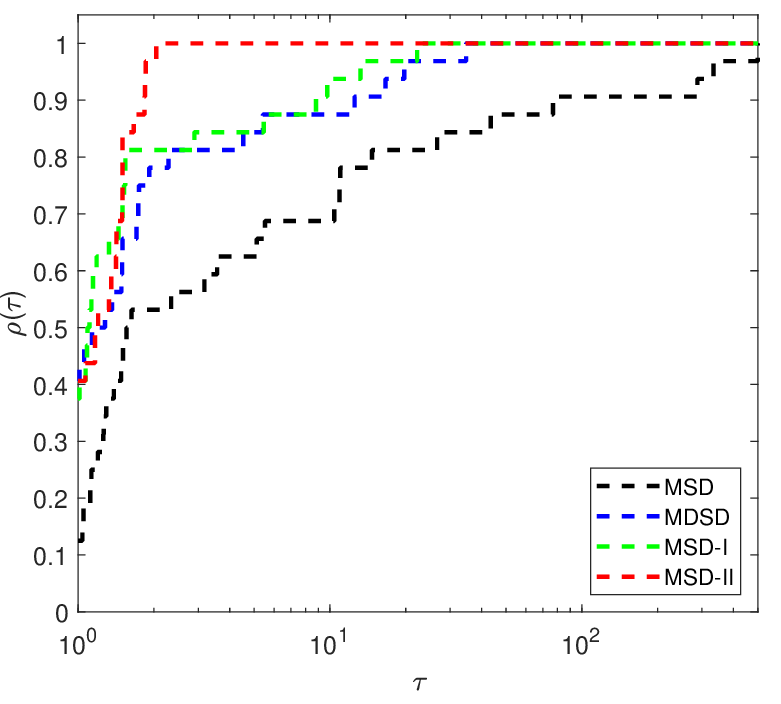}}
	\subfigure[Computational time]{
		\label{cpu}
		\includegraphics[width=0.4\textwidth, keepaspectratio]{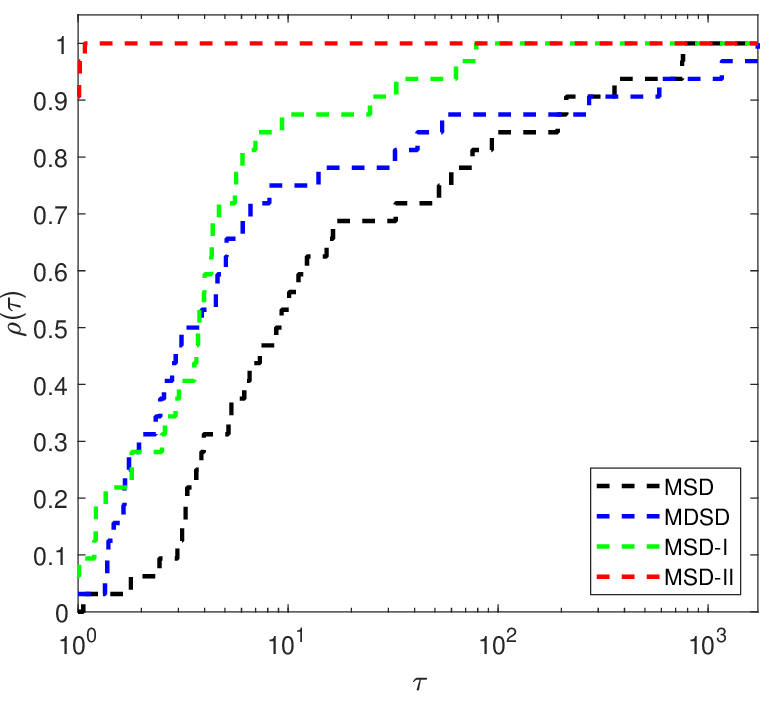}}
	\caption{Performance profiles considering 100 initial points for each test problem using as the performance measurement: (a) number of iterations; (b) number of functions evaluations; (c) number of gradient evaluations; (d) computational time.}
	\label{iter_feval_perfprof}
\end{figure}

We additionally evaluated the algorithms' capability to appropriately generate Pareto frontiers. To achieve this, we investigated seven bi-objective problems: Far1, Hil1, PNR, and WIT1--WIT4. The results illustrated in Fig. \ref{pof} demonstrate that, with a reasonable number of starting points, these methods are proficient in discovering favorable solutions for the aforementioned problems.

\begin{figure}[htbp]
	\centering
	\includegraphics[width=\textwidth,keepaspectratio]{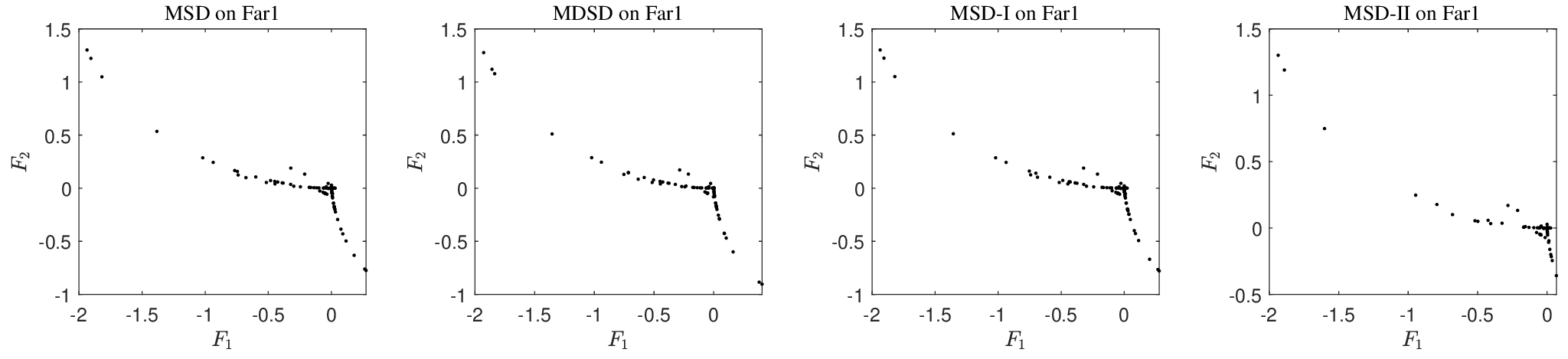}
	\includegraphics[width=\textwidth,keepaspectratio]{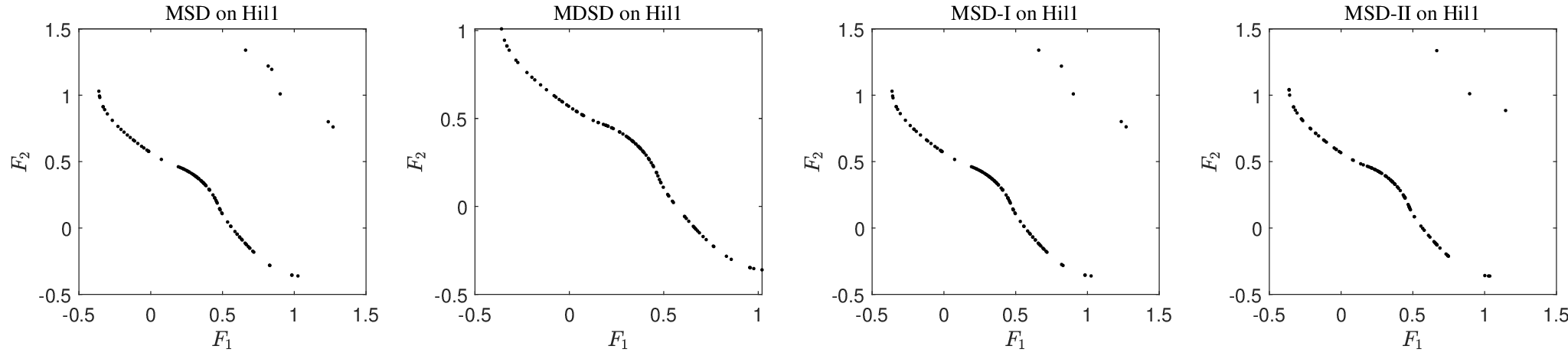}
	\includegraphics[width=\textwidth,keepaspectratio]{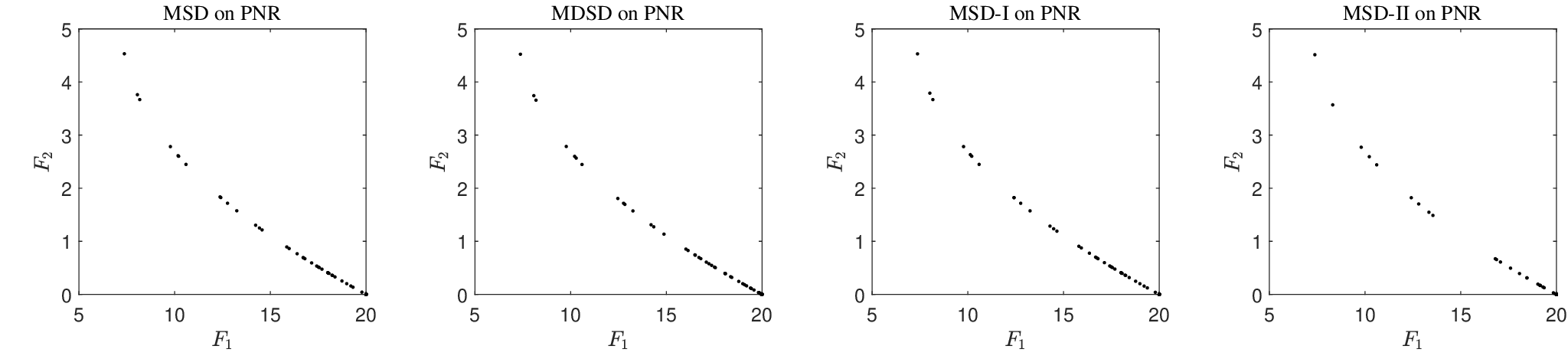}	\includegraphics[width=\textwidth,keepaspectratio]{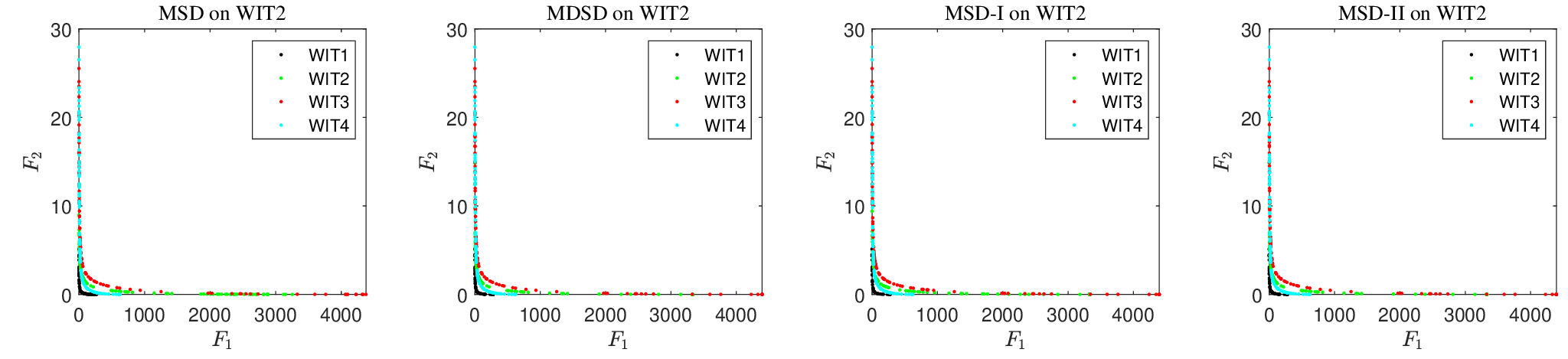}
	\vskip0.1in
	\caption{The final solutions found by the MSD, MDSD, MSD-I and MSD-II algorithms on Far1, Hil1, PNR and WIT2--WIT4.}
	\label{pof}
\end{figure}

Finally, given the excellent performance of the MSD-II algorithm as shown in Table \ref{results}, we tested it on the larger and complex convex problem FDS, varying the dimensions. As reported in \cite{fliege2009newton}, the numerical difficulty of problem FDS sharply increases with the dimension $n$. From Table \ref{fds}, it is evident that for FDS with varying dimensions, the MSD-II algorithm performs well.

\begin{table}[htbp]\scriptsize
	\centering\setstretch{1.25}
	\caption{Performance of the MSD-II algorithm on FDS, varying the dimensions.}
	\begin{tabular}{llllll}
		\hline
	    $n$    & it    & fE    & gE    & T      & \% \\\hline
		200  & 7.34  & 58.68 & 15.68 & 2.0197e+00 & 100  \\
		500  & 7.66  & 63.92 & 16.32 & 2.6750e+00  & 100  \\
		1000 & 10.06 & 89.48 & 21.12 & 2.8643e+00 & 100  \\
		2000 & 8.79  & 68.31 & 18.58 & 3.1071e+00 & 100  \\
		4000 & 9.36  & 75.11 & 19.72 & 3.7364e+00 & 100  \\
		5000 & 9.49  & 69.4  & 19.98 & 3.8682e+00 & 100\\
		10000 & 10.05  &70.70  & 21.10 &4.3852e+00 & 100\\\hline
	\end{tabular}
	\label{fds}
\end{table}

\section{Conclusion}\label{sec:6}

In this paper, we proposed a modified scheme for multi-objective steepest descent algorithm, utilizing a straightforward multiplicative adjustment of the stepsize as illustrated in \eqref{new_iter}. We presented two methods for choosing the modification parameter $\theta_{k}$, leading to two improved multi-objective steepest descent algorithms designed for solving unconstrained multi-objective optimization problems. Numerical experiments conducted on a set of unconstrained multi-objective test problems indicate that the new algorithms exhibit greater advantages compared to the multi-objective steepest descent algorithm. 

As part of our future research agenda, we aim to explore alternative strategies for updating the modification parameter $\theta_{k}$ in \eqref{new_iter}. Additionally, similar to the research work in \cite{andrei2010accelerated}, the presented multiplicative manner can be applied to different multi-objective descent algorithms, such as multi-objective conjugate gradient method \cite{lucambio2018nonlinear}, to upgrade existing performance characteristics.  While we do not provide convergence results for the MSD-II algorithm (see Remark \ref{rem5}), numerical results in Section \ref{sec:5} indicate its outstanding performance, motivating us to further investigate its theoretical properties, which will be a focus of our future work.

\end{document}